\documentclass[reqno,12pt]{amsart}

\usepackage[margin=1.2in]{geometry}

	\usepackage{comment}

	\usepackage[utf8]{inputenc}

	\usepackage{amsmath,amsfonts,amssymb,amsthm}

	\usepackage[T1]{fontenc}

	\usepackage{etoolbox}
	\patchcmd{\section}{\scshape}{\scshape\bfseries}{}{}
	\makeatletter
	\renewcommand{\@secnumfont}{\scshape\bfseries}
	\makeatother

	\let\emptyset\varnothing
	
	\let\epsilon\varepsilon

	\usepackage{parskip}

	\usepackage[bookmarks=true,bookmarksopen=true]{hyperref}

	\newtheorem{theorem}{Theorem}

	\newtheorem{corollary}[theorem]{Corollary}
        
	\newtheorem{lemma}[theorem]{Lemma}
	
	\newtheorem{main}{Theorem}

	\numberwithin{theorem}{section}
	
	\theoremstyle{plain}

	\theoremstyle{definition}

	\theoremstyle{remark}

	\numberwithin{equation}{section}
	
	\numberwithin{table}{section}

	\usepackage{tikz}
	\usetikzlibrary{matrix}

    \usepackage[all,cmtip]{xy}

	\usepackage{todonotes}

	\usepackage{enumerate}

	\usepackage{float}

	\makeatletter
	\def\blfootnote{\gdef\@thefnmark{}\@footnotetext}
	\makeatother

        \DeclareMathOperator{\sys}{sys}

	\newcommand{\bR}{\mathbb{R}}
	
	\newcommand{\bZ}{\mathbb{Z}}	\newcommand{\Z}{\bZ}

	\mathcode`l="8000
	\begingroup
	\makeatletter
	\lccode`\~=`\l
	\DeclareMathSymbol{\lsb@l}{\mathalpha}{letters}{`l}
	\lowercase{\gdef~{\ifnum\the\mathgroup=\m@ne \ell \else \lsb@l \fi}}%
	\endgroup

\usepackage{nicematrix}

\input{bordersquare.tex}
\date{\today}
\usepackage{appendix}
\usepackage{multicol}
\usepackage{subcaption}
\usetikzlibrary{calc}

\title{Higher cosystoles of matroids}

\author{James Dylan Douthitt}
\address{Department of Mathematics, Syracuse University}
\email{jddouthi@syr.edu}

\author{Elana Israel}
\address{Department of Mathematics, Statistics, and Computer Science, St. Olaf College}
\email{israel3@stolaf.edu}

\author{Lee Kennard}
\address{Department of Mathematics, Syracuse University}
\email{ltkennar@syr.edu}

\begin{document}

\begin{abstract}
We define a matroid invariant called the three-cosystole that is related to higher notions of cogirth for weighted matroids, and we prove an optimal upper bound for it in the class of regular matroids of rank at most six. To accomplish this, we show that it is increasing under matroid extensions and then estimate it for each of the maximal simple regular matroids of rank at most six.
\end{abstract}

\maketitle

A weighted matroid is a matroid $M$ equipped with a non-negative and non-zero function $\mu:E \to \bR_{\geq 0}$ on its ground set $E$. We define a normalized, weighted girth by the formula
	\[\sys(M,\mu) = \min \mu(C)/\mu(E),\]
where the minimum runs over circuits $C$, and where $\mu(X)$ for a subset $X \subseteq E$ denotes the sum of $\mu(e)$ over $e \in X$. The {\it systole} of $M$ is the matroid invariant 
    \[\sys(M) = \max \sys(M, \mu)\]
obtained by maximizing over admissible weight functions. The {\it cosystole} $\sys^*(M)$ is defined similarly using cocircuits in place of circuits. Matroid cosystoles have been studied in \cite{ChoChenDing07} in general, \cite{CrenshawOxley23} in the binary case, and \cite{KWW2} in the regular case. An advantage of (co)systoles over (co)girths is their invariance under scalings of the weight function. In particular, we may restrict to the case where $\mu(E) = 1$.

The main results in \cite{KWW2} are in Riemannian geometry, but the main computation to prove them is to show sharp upper bounds of the form $\sys^*(M) \leq s^*(d)$ for regular matroids of rank $d \leq 9$. 
For rank six, the estimate states that $\sys^*(M) \leq 1/3$. Moreover, Theorem 3.14 in that paper shows that, for any $\mu$ with total weight one, there exist six pairwise distinct cocircuits $C_i^*$ whose $\mu$-weights sum to at most two. In addition to implying $\sys^*(M) \leq 1/3$, it follows using that $M$ is binary that there exist three linearly independent cocircuits whose $\mu$-weights sum to at most one.

In this paper, we reprove the second of these estimates using a different strategy. This has three advantages. The first is the following, formal strengthening of the estimate on linearly independent cocircuits.

\begin{main}\label{thm:3-cogirth}
If $M$ is a regular matroid of rank six and $\mu$ is a probability distribution on $E(M)$, 
then there exist cocircuits $C_1^*$, $C_2^*$, and $C_3^*$ satisfying the estimate
	\[\mu(C_1^*) + \mu(C_2^*) + \mu(C_3^*) \leq 1\]
and with the property that $C_h^* \not\subseteq C_i^* \cup C_j^*$ for all $\{h, i, j\} = \{1, 2, 3\}$.
\end{main}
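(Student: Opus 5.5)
The approach is the one announced in the abstract. We define the three-cosystole $\sys_3^*(M)$ as the maximum over probability distributions $\mu$ of the minimum of $\mu(C_1^*)+\mu(C_2^*)+\mu(C_3^*)$. Here the minimum runs over triples of cocircuits in \emph{general position}, meaning that no one of them is contained in the union of the other two. The theorem is then the bound $\sys_3^*(M)\le 1$ for regular $M$ of rank six, and we prove it in two steps.

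\textbf{Step 1: monotonicity and reduction.} We first show that $\sys_3^*$ does not decrease under matroid extensions $M\subseteq N$ of the same rank. Given $\mu$ on $E(M)$, extend it by zero to $E(N)$. Every cocircuit $D^*$ of $N$ restricts to a cocircuit $D^*\cap E(M)$ of $M$, since $M=N\setminus(E(N)-E(M))$ and, as the rank is unchanged, $D^*\cap E(M)$ is a nonempty union of cocircuits of $M$. We therefore need, for a good triple in $M$, a lift to a good triple in $N$ with the same weights. Each cocircuit of $M$ is the complement of a hyperplane $H$ of $M$, and the closure of $H$ in $N$ is a hyperplane of $N$. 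Its complement is a cocircuit of $N$ meeting $E(M)$ in exactly the original cocircuit. Suppose the lifts satisfied $D_h^*\subseteq D_i^*\cup D_j^*$. Intersecting with $E(M)$ would then give $C_h^*\subseteq C_i^*\cup C_j^*$, a contradiction, so general position is preserved and $\sys_3^*(M)\le \sys_3^*(N)$. We also reduce to simple matroids: loops carry no cocircuit weight, so moving their weight elsewhere only helps, and parallel classes can be merged with summed weights.

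By Seymour's decomposition, every simple regular matroid of rank at most six embeds in a maximal one. These are the graphic $M(K_7)$, the cographic or $R_{10}$-type matroids, and $1$-, $2$-, $3$-sums of graphic, cographic and $R_{10}$ pieces; the paper presumably lists them, for instance via known classifications of extremal regular matroids. It therefore suffices to bound $\sys_3^*(N)\le 1$ for each maximal $N$. Lower-rank matroids reduce to rank six by adding coloops of weight zero. This needs care, because a coloop is a cocircuit of weight $0$, so the lower-rank cases are in fact easier.

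\textbf{Step 2: averaging over a symmetric family.} For each maximal $N$ we exhibit a family $\mathcal{T}$ of general-position triples, possibly weighted, such that every element $e\in E(N)$ lies in a total weight of triples, counted with multiplicity, at most $|\mathcal{T}|$. Averaging $\mu(C_1^*)+\mu(C_2^*)+\mu(C_3^*)$ over $\mathcal{T}$ then gives at most $\mu(E)=1$, so some triple attains $\le 1$.

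For $M(K_7)$, take vertex stars $\delta(v)$. Three stars $\delta(u),\delta(v),\delta(w)$ are in general position, because each contains the edge from its vertex to a fourth vertex. Over all $\binom73$ triples of vertices, each edge $uv$ is covered twice in every triple containing both $u$ and $v$ and once in every triple containing exactly one of them. The resulting count is $2\cdot 5+1\cdot 2\cdot 10=30$ out of $35$ triples, which is at most $35$, as required. For cographic pieces we use cuts from cycles in the dual graph, for $R_{10}$ its transitive automorphism group, and for sums we combine the two pieces' families and handle the glued elements.

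\textbf{Main obstacle.} The hard part is Step 2 for the non-graphic maximal matroids and the $3$-sums. There it is unclear which symmetric family achieves coverage of at most one per element, and the general-position condition must be checked by hand. For these cases I would first try LP duality: an optimal fractional family exists exactly when $\sys_3^*\le 1$. I would then exploit the automorphism group to reduce the verification to one orbit representative per element.
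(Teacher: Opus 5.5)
Your overall architecture (monotonicity, reduction to the maximal simple regular matroids, averaging over good triples) matches the paper's, but Step~1 proves the wrong inequality. Lifting every good triple of $M$ to a good triple of $N$ of the same weight only shows that the minimum over good triples of $N$ is at most the minimum over good triples of $M$. That is, $\sys_3^*(N,\hat\mu)\le\sys_3^*(M,\mu)$ for the zero extension $\hat\mu$, which gives no upper bound on $\sys_3^*(M)$. To get $\sys_3^*(M)\le\sys_3^*(N)$ you need the descent direction: every good triple $D_1^*,D_2^*,D_3^*$ of $N$ must yield a good triple of $M$ of no larger weight. This is where the difficulty lies. The sets $D_h^*\cap E(M)$ are in general only unions of cocircuits; your claim that they are cocircuits already fails for $M(K_4)$ minus an edge, where a four-edge bond splits into two vertex stars. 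Choosing cocircuits inside them can also destroy non-inclusion when the only element of $D_3^*\setminus(D_1^*\cup D_2^*)$ is a deleted element $p$, which is exactly where $\hat\mu$ vanishes. The paper's proof of Theorem~\ref{thm:monotonicity} handles this one element at a time, using witnesses $f_h\in D_h^*\setminus(D_i^*\cup D_j^*)$:
if every witness can be chosen different from $p$, it takes cocircuits of the deletion through the witnesses;
otherwise it picks a new witness $g\in D_3^*-p$, in all three cocircuits if possible and else in $D_1^*\cap D_3^*$ with $\mu(D_1^*\cap D_3^*)\ge\mu(D_2^*\cap D_3^*)$;
it then uses cocircuit elimination against $D_3^*$ at $g$ to replace the other cocircuit(s) by ones avoiding $g$;
finally it checks by a weight count that the total does not increase.
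Without an argument of this kind, your reduction to maximal matroids is unjustified.

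Step~2 is carried out only for $M(K_7)$, where your count $30/35=6/7$ is right and matches the paper. For the remaining ten matroids of Theorem~\ref{thm:MSR} you give no families. The LP-duality remark only restates that $\sys_3^*\le 1$ is equivalent to their existence, so the computational heart of the proof is missing. The paper's ideas are as follows.
For $G_1,\dots,G_8$, an embedding in the projective plane has six facial cycles, every edge lies on exactly two, and any three are in general position. So these six cocircuits of $M^*(G_i)$ have total weight $2$, and some three weigh at most $1$.
For $G_9$, the six $4$-cycles cover each edge at most twice and split into two good triples.
For $P(M(K_3),R_{10})$ and $P_T(M(K_5),M^*(K_{3,3}))$, four-element cocircuits modeled on an $M^*(K_{3,3})$ minor are combined with small cocircuits at the gluing, taken with multiplicity: the two-element cocircuit $\{f_0,f_1\}$, resp.\ the supports of $v_1$ and $v_2$. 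This gives thirteen cocircuits covering each element at most four times, arranged in groups in which any three are in general position, which yields $12/13$.
Two smaller points. Your lower-rank remark is false and unnecessary: $\sys_3^*$ exceeds $1$ in ranks three to five (e.g.\ $\sys_3^*(M(K_4))=3/2$), and padding with weight-zero coloops says nothing about the original matroid. Also, the $G_i$ are non-planar, so the cocircuits of $M^*(G_i)$ are simply cycles of $G_i$, not cuts in a dual graph.
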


Theorem \ref{thm:3-cogirth} can be rephrased as an upper bound on the matroid invariant
    \[\sys_3^*(M) = \max_\mu \, \min \, \sum_{i=1}^{3} \mu(C_i^*)\]
that we call the $3$-cosystole, where the maximum runs over weight functions $\mu:E(M) \to \mathbb{R}_{\geq 0}$ on the ground set with $\mu(E) = 1$, and where the minimum runs over all triples of cocircuits $\{C_1^*, C_2^*, C_3^*\}$ with the property that no $C_i^*$ is contained in the union of the other two. This {\it non-inclusion property} is stronger than requiring only that the cocircuits $C_i^*$ are pairwise distinct (equivalently that $C_i^* \not\subseteq C_j^*$ for all $i \neq j$).

The second outcome of the proof is fundamental to the proof strategy, which involves a reduction to the case of {\it maximal simple regular} matroids. This is achieved by showing that higher cosystoles are increasing under equal rank extensions. This result can be rephrased in terms of deletions and then extended to arbitrary minors.

\begin{main}[Monotonicity]\label{thm:monotonicity}
Suppose $M$ is a regular matroid. If $e$ is an element of the ground set that is not a coloop and $r(M/e)\geq 3$, then 
    \[{\sys}_3^*(M\backslash e)
    \leq {\sys}_3^*(M)
    \leq {\sys}_3^*(M/e).\]
Similarly, if $\operatorname{si}(M)$ denotes the simplification of $M$, then
    \[\sys_3^*(M) = \sys_3^*(\operatorname{si}(M)).\]
\end{main}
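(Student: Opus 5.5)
The plan is to prove the two inequalities and the simplification statement separately, each time by transporting weight functions between $M$ and the relevant minor. Throughout I will use three standard facts. First, the cocircuits of $M/e$ are exactly the cocircuits of $M$ that avoid $e$. Second, since $e$ is not a coloop, the cocircuits of $M\backslash e$ are the minimal nonempty sets of the form $C^*\setminus e$ with $C^*$ a cocircuit of $M$. Third, since $M$ is binary, each $C^*\setminus e$ is a cocycle of $M\backslash e$, that is, a disjoint union of cocircuits of $M\backslash e$.

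\emph{Upper inequality} $\sys_3^*(M)\le\sys_3^*(M/e)$. Fix $\mu$ on $E(M)$ with $\mu(E)=1$, and let $\mu'$ be its restriction to $E\setminus e$.
\begin{itemize}
\item If $\mu(e)<1$, apply the definition of $\sys_3^*(M/e)$ to $\mu'/(1-\mu(e))$. This gives a non-inclusion triple of cocircuits of $M/e$ with $\mu'$-weight at most $(1-\mu(e))\sys_3^*(M/e)\le \sys_3^*(M/e)$. These are cocircuits of $M$ avoiding $e$, so their $\mu$-weights agree with their $\mu'$-weights and the non-inclusion property is unchanged.
\item If $\mu(e)=1$, I need a non-inclusion triple of cocircuits avoiding $e$, which then has weight $0$. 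Since $r(M/e)\ge 3$, choose a basis $B$ of $M/e$ and three elements $b_1,b_2,b_3\in B$. Take the fundamental cocircuits $C^*(b_i,B)$ in $M/e$. Each contains $b_i$ and no other element of $B$, so no one lies in the union of the other two.
\end{itemize}
Taking the maximum over $\mu$ gives the upper inequality.

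\emph{Lower inequality} $\sys_3^*(M\backslash e)\le\sys_3^*(M)$. This is where I expect the main obstacle. Given $\mu$ on $E\setminus e$ with total weight $1$, extend it by $\mu(e)=0$ and obtain a non-inclusion triple $C_1^*,C_2^*,C_3^*$ in $M$ of weight at most $\sys_3^*(M)$. Each $C_i^*\setminus e$ is a nonempty cocycle of $M\backslash e$ of the same weight, and it is a cocircuit or a disjoint union of cocircuits. The difficulty is that non-inclusion can break in two ways:
\begin{itemize}
\item it may have been witnessed only by $e$;
\item picking a cocircuit $D_i\subseteq C_i^*\setminus e$ may destroy it.
\end{itemize}
My plan is to take the weight-minimal triple in $M$ and, among those, one minimizing $\sum|C_i^*|$, then argue by exchange. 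Suppose $D_h\subseteq D_i\cup D_j$ for every choice of the $D$'s. Using the binary structure, I would form symmetric differences such as $C_h^*\triangle C_i^*$, which are cocycles, and extract from them cocircuits of $M$ of no larger weight. This should produce a smaller triple in $M$, or directly a non-inclusion triple in $M\backslash e$.

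The key lemma I would aim for is the following. If three cocycles of a binary matroid have linearly independent span in the cocycle space, then one can choose cocircuits inside them, up to symmetric differences of no larger weight, that satisfy non-inclusion. The hypothesis is met here: rank is preserved under deleting a non-coloop, and the rank hypothesis on $M/e$ supplies the needed dimension. If that lemma proves delicate, my fallback is a fundamental-cocircuit argument with respect to a basis of $M\backslash e$ that extends a basis of $M$.

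\emph{Simplification.} Loops lie in no cocircuit, so deleting a loop does not change the family of cocircuits. A weight function on $\operatorname{si}(M)$ extends by $0$ on loops; conversely, weight placed on loops can be moved onto a non-loop, which increases each relevant sum only after renormalization in our favor. Hence the maximum is unchanged. Parallel elements lie in exactly the same cocircuits, so summing weights over each parallel class gives a weight-preserving correspondence between weight functions on $M$ and on $\operatorname{si}(M)$. This correspondence respects cocircuits and the non-inclusion property, which yields the equality $\sys_3^*(M)=\sys_3^*(\operatorname{si}(M))$.
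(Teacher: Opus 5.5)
Your contraction inequality $\sys_3^*(M)\le\sys_3^*(M/e)$ and your simplification argument are correct. They are also slightly more self-contained than the paper's. For contraction, you rescale the restriction of $\mu$ and handle $\mu(e)=1$ with fundamental cocircuits, whereas the paper spreads $\mu(e)$ uniformly over the other elements. For simplification, you push weights forward over parallel classes in both directions, whereas the paper invokes the deletion lemma for one direction.

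The genuine gap is the deletion inequality $\sys_3^*(M\backslash e)\le\sys_3^*(M)$, which is the substantive part of the theorem. Your ``key lemma'' is never stated precisely or proved, and the exchange argument is never carried out. So nothing in the proposal actually produces a non-inclusion triple in $M\backslash e$ of weight at most $\mu(C_1^*)+\mu(C_2^*)+\mu(C_3^*)$. (The rank hypothesis on $M/e$ plays no role here; what matters is that $e$ is not a coloop.) Your second failure mode is not a real obstacle. If each $C_h^*$ has a private element $f_h\in C_h^*\setminus(C_i^*\cup C_j^*)$ with $f_h\neq e$, then cocircuits $D_h\subseteq C_h^*-e$ of $M\backslash e$ chosen to contain $f_h$ satisfy non-inclusion automatically, since $D_i\cup D_j\subseteq C_i^*\cup C_j^*$. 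The whole difficulty is therefore the case $C_3^*\setminus(C_1^*\cup C_2^*)=\{e\}$, and that is exactly what you leave open.

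The missing idea is to choose a new private element for the third set inside $C_3^*-e\subseteq C_1^*\cup C_2^*$, and to remove it from the other two without gaining weight. The tool is that $C_i^*\triangle C_3^*$ is a cocycle that contains $f_i$, misses $C_i^*\cap C_3^*$, and has weight $\mu(C_i^*)+\mu(C_3^*)-2\mu(C_i^*\cap C_3^*)$. There are two cases.

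\emph{Triple intersection nonempty.} Pick $g\in C_1^*\cap C_2^*\cap C_3^*$. Take $D_3\ni g$ inside $C_3^*-e$, and $D_i\ni f_i$ inside $(C_i^*\triangle C_3^*)-e$ for $i=1,2$. Since $\mu(e)=0$ gives $\mu(C_1^*\cap C_3^*)+\mu(C_2^*\cap C_3^*)=\mu(C_3^*)+\mu(C_1^*\cap C_2^*\cap C_3^*)$, the total is at most $\sum_i\mu(C_i^*)-2\mu(C_1^*\cap C_2^*\cap C_3^*)$.

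\emph{Triple intersection empty.} Then $\mu(C_3^*)=\mu(C_1^*\cap C_3^*)+\mu(C_2^*\cap C_3^*)$. Because $e$ is not a coloop, you may relabel so that $C_1^*\cap C_3^*\neq\emptyset$ and $\mu(C_1^*\cap C_3^*)\ge\mu(C_2^*\cap C_3^*)$. Pick $h\in C_1^*\cap C_3^*$ and take $D_3\ni h$ in $C_3^*-e$, $D_1\ni f_1$ in $(C_1^*\triangle C_3^*)-e$, and $D_2\ni f_2$ in $C_2^*-e$. The total then changes by $\mu(C_2^*\cap C_3^*)-\mu(C_1^*\cap C_3^*)\le 0$.

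This is the content of the paper's Cases 2 and 3. The paper phrases them via cocircuit elimination, but its weight bounds are what the binary symmetric difference gives.

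Alternatively, your exchange idea can be made rigorous. In a binary matroid, take a linearly independent triple of cocycles minimizing (total weight, total size) lexicographically. Each member is then a cocircuit. If $Y_3\subseteq Y_1\cup Y_2$, comparing the replacements $Y_1\mapsto Y_1\triangle Y_3$ and $Y_2\mapsto Y_2\triangle Y_3$ forces $Y_1\cap Y_2\cap Y_3=\emptyset$. One then checks that $(Y_1,Y_2\triangle Y_3,Y_3)$ is again minimal and satisfies non-inclusion. Non-inclusion triples are linearly independent, and restriction to $E-e$ is injective on cocycles when $e$ is not a coloop. Together these give the deletion inequality. Either way, such an argument has to be supplied.
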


The third outcome of the proof is a listing of the maximal simple regular matroids in rank six and a estimate of the invariant $\sys_3^*(M)$ in each case. It turns out there are exactly $11$ such matroids, as can be seen by using Seymour's classification (see \cite{Seymour80, DanilovGrishukhin99}). We describe these matroids in the notation of \cite{Oxley-book}, highlight properties relevant to our computations, and outline the second author's recent, independent proof of this list in \cite{Israel-PhD}, which highlights the curious property that exactly one of the {\it maximal simple cographic} matroids in rank six is not {\it maximal simple regular} (see also \cite[Page 424]{DanilovGrishukhin99}). Rank six is the first time where this happens. Rank six is also the first time we see a decomposable regular matroid that is not also graphic, cographic, or sporadic, so the list an interesting example to be paired with Seymour's theorem.

Given the list, the remainder of the work in this paper is to prove the desired estimates on the $11$ maximal simple regular matroids of rank six. The result is as follows. 

\begin{main}\label{thm:estimates}
If $M$ is a maximal simple regular matroid of rank six, then one of the following holds:
    \begin{enumerate}
        \item $M$ is graphic and $\sys_3^*(M) = \tfrac 6 7$.
        \item $M$ is cographic and $\sys_3^*(M) \leq 1$. 
        \item $M$ is the parallel connection $P(M(K_3), R_{10})$ or the generalized parallel connection $P_T(M(K_5), M^*(K_{3,3}))$ and, in either case, $\sys_3^*(M) = \tfrac{12}{13}$.
    \end{enumerate}
\end{main}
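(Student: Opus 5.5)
The plan is to go through Seymour's list of the $11$ maximal simple regular matroids of rank six, taken as given from the discussion above. For each one we need an upper bound on $\sys_3^*(M)$ and, where equality is claimed, a matching lower bound.

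\textbf{Upper bounds.} Fix $\mu$ with $\mu(E)=1$. We exhibit a family $\mathcal{F}$ of triples of cocircuits, each triple having the non-inclusion property, chosen so that every element of $E$ is covered the same number of times, say $c$, by the multiset union of the triples. Averaging then gives
\[\min_{T\in\mathcal{F}} \sum_{C^*\in T}\mu(C^*) \leq \frac{c}{|\mathcal{F}|}.\]
Where $M$ has a transitive enough automorphism group, $\mathcal{F}$ can be taken to be an orbit. Where it does not, we take a weighted family and solve the small covering LP by hand.

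In the graphic case the maximal simple graphic matroid of rank six is $M(K_7)$, with $21$ elements. Its vertex stars are cocircuits of size $6$. Three stars at distinct vertices $u,v,w$ cover
\[3\cdot 6 - 3 = 15\]
edges, since each of the edges $uv,vw,uw$ is counted twice. This suggests a mixed family, for example stars together with cuts $\delta(\{u,v\})$ of size $10$. The coefficients are to be tuned so that the bound $c/|\mathcal{F}|$ equals $\tfrac67$. For the lower bound $\sys_3^*(M(K_7))\geq \tfrac67$ we take $\mu$ uniform, $\mu(e)=\tfrac1{21}$, and check that any three non-inclusive cuts of $K_7$ have total size at least $18$. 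The smallest possibility is three stars, with $18$ edges counted with multiplicity, which gives $\tfrac{18}{21}=\tfrac67$.

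The cographic cases are the $M^*(G)$ for the relevant maximal graphs $G$. Cocircuits of $M^*(G)$ are cycles of $G$, and $|E(G)|=$ rank $+$ (number of vertices $-1$). We choose three cycles, none contained in the union of the other two, whose sum of lengths is at most $|E|$ under a uniform cover. Typically a family of triangles or short cycles covering each edge equally often will do, using an ear or face decomposition. Only the bound $\leq 1$ is required here, so a single symmetric covering family should suffice.

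For the two sporadic or decomposable matroids $P(M(K_3),R_{10})$ and $P_T(M(K_5),M^*(K_{3,3}))$, the cocircuits can be described from those of the pieces. We look for cocircuit triples meeting the common $K_3$ or triangle $T$ in a controlled way, then weight the triples so that each element is covered equally. The target is $\tfrac{12}{13}$. For the matching lower bound we guess the optimal $\mu$ by LP duality: weights constant on the orbits of the automorphism group, with different values on the connecting triangle. We then verify that every non-inclusive triple has weight at least $\tfrac{12}{13}$, by enumerating cocircuit sizes by type.

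\textbf{Main obstacle.} The hardest step is the exact value $\tfrac{12}{13}$ for the two glued matroids. There the symmetry is broken, so both the optimal weighting $\mu$ and the dual covering family must be found by solving a small linear program over orbit types. The lower bound also requires ruling out every cheap non-inclusive triple, which needs a careful enumeration of the small cocircuits of $R_{10}$ and $M^*(K_{3,3})$ after gluing. I would first compute the orbit-reduced LP explicitly and then read off both certificates from its optimal primal and dual solutions.
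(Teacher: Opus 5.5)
Your template matches the paper's: average over a family of non-inclusive triples for the upper bound, and exhibit one weight function for the lower bound. But for a statement like this the proof \emph{is} the list of certificates, and you supply almost none of them. Where you do commit to specifics, they point the wrong way.

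\textbf{Graphic case.} The union count $3\cdot 6-3=15$ is irrelevant, since $\mu(C_1^*)+\mu(C_2^*)+\mu(C_3^*)$ counts $uv,vw,uw$ twice. The seven vertex stars alone already work. Each edge lies in exactly two of them and any three are non-inclusive, so the seven star weights sum to $2$ and the lightest three weigh at most $\tfrac67$. Mixing in the $10$-edge cuts $\delta(\{u,v\})$ can only raise the averaging bound. Under the uniform weight every non-inclusive triple weighs at least $\tfrac{18}{21}$, with equality only for three stars, so any correct ``tuning'' gives those cuts coefficient $0$.

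\textbf{Cographic case.} Here there is no construction at all. You need $N$ cycles covering each edge at most $c$ times with $N\geq 3c$, split into non-inclusive triples. For the Petersen graph $G_1$ the bound $1$ is sharp, so such a family must consist of $5$-cycles only and cover every edge equally often. Triangles, ear decompositions and ``short cycles'' are of no use there, and nothing in your sketch says why a suitable family exists for the other $G_i$. The missing idea is the paper's. The graphs $G_1,\dots,G_8$ embed in $\mathbb{P}_2$, and Euler's formula $10-15+F=1$ forces exactly six faces. Each edge lies on exactly two facial cycles, and any three facial cycles are non-inclusive. So the six face weights sum to $2$, and one of two complementary triples weighs at most $1$. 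The graph $G_9$ is handled separately with its six $4$-cycles: each edge lies in at most two of them, and they split into two non-inclusive triples.

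\textbf{Glued matroids.} Your requirement that each element be covered equally cannot be met at the value $\tfrac{12}{13}$ for $P(M(K_3),R_{10})$. The optimal weight $\mu_{2,0,1}$ vanishes on the basepoint $f_2$. Every cocircuit through $f_2$ has the form $D\cup\{f_0\}$ or $D\cup\{f_1\}$ with $D\ni f_2$ a cocircuit of $R_{10}$, hence has $\mu_{2,0,1}$-weight at least $\tfrac5{13}$. Suppose a distribution on non-inclusive triples covered every element at most $\tfrac{12}{13}$ times on average. Then its expected $\mu_{2,0,1}$-weight would be both at most and at least $\tfrac{12}{13}$. So every triple used would consist of cocircuits of weight exactly $\tfrac4{13}$, none through $f_2$, and $f_2$ would be covered zero times. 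You must allow coverage ``at most $c$''.

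The paper takes $\{f_0,f_1\}$ with multiplicity $4$ together with the nine $4$-element cocircuits of $R_{10}/f_2\cong M^*(K_{3,3})$. This gives $4\mu(C_0^*)+\sum_{i=1}^{9}\mu(C_i^*)=4-4\mu(f_2)\le 4$. It splits these into one group of four and three groups of three, each containing $\{f_0,f_1\}$. With group sums $S_1,\dots,S_4$ this gives $\min(\tfrac34S_1,S_2,S_3,S_4)\leq\tfrac{12}{13}$. Solving your orbit LP would produce certificates of this kind, but as written this part is a promise rather than a proof.

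Your guess for the lower-bound weights (constant on orbits, distinguished on the connecting triangle) has the right shape. For the parallel connection it is $\mu_{2,0,1}$. For $P_T(M(K_5),M^*(K_{3,3}))$, in the column labels of $A_{16}$, weight $0$ on $T=\{13,14,15\}$ and $\tfrac1{13}$ elsewhere works. The zeros on columns $8,9,10$ printed in the paper would instead give the cocircuit $\{1,2,9,10\}$ weight $\tfrac2{13}$.
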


In particular, Theorem \ref{thm:estimates} implies $\sys_3^*(M) \leq 1$ for maximal simple regular matroids of rank six. Theorem \ref{thm:monotonicity} extends this estimate to all regular matroids of rank six and hence implies Theorem \ref{thm:3-cogirth}. 

We also note that putting constant weights on the edges gives a lower bound in terms of unweighted matroid invariants:
    \[\sys_3^*(M) \geq \sys_3^*(M, \mu_1) = \min \frac{|C_1^*| + |C_2^*| + |C_3^*|}{|E(M)|},\]
where $|C_i^*|$ denotes the cardinality of the cocircuit $C_i^*$, and the minimum again runs over the triples of cocircuits satisfying the non-inclusion property. For the cographic matroid on the Petersen graph, $G_1$ in Figure~\ref{fig:Graphs}, all cocircuits have at least five elements and the ground set has $15$, so this implies
    \[\sys_3^*(M^*(G_1)) \geq 1.\]
Therefore the upper bound of one is optimal for regular matroids of rank six. 

In general, one can combine a simple averaging argument together with an estimate as in the previous paragraph to get the estimates
    \[\sys_3^*(M) \geq 3 \sys^*(M) \quad\text{and}\quad 
    \sys^*(M) \geq \frac{g^*(M)}{|E(M)|},\]
where $g^*(M)$ denotes the cogirth of $M$. While equality holds in both cases for the cographic matroid on the Peterson graph, the fact that $\sys^*(M)$ is a maximum of relative weighted cogirths allows for $\sys^*(M) > g^*(M) / |E(M)|$. The first time we see this for a maximal simple regular matroid is for a pair of cographic matroids in rank five. The next theorem includes this calculation and, for completeness, the $3$-cosystole calculation for every maximal simple regular matroid of rank less than six. 

\begin{main}\label{thm:estimates<6}
If $M$ is a maximal simple regular matroid of rank $d \in \{3, 4, 5\}$, then one of the following holds:
    \begin{enumerate}
        \item $M$ is the graphic matroid $M(K_{d+1})$ and $\sys_3^*(M) = \tfrac{6}{d+1}$.
        \item $M$ is the cographic matroid $M^*(K_{3,3})$ of rank four and $\sys_3^*(M) = \tfrac 4 3$.
        \item $M$ is the cographic matroid $M^*(G_{5,3})$ of rank five and $\sys_3^*(M) = \tfrac{12}{11}$.
        \item $M$ is the cographic matroid $M^*(G_{5,4})$ of rank five and $\sys_3^*(M) = \tfrac 9 8$.
        \item $M$ is the sporadic matroid $R_{10}$ of rank five and $\sys_3^*(M) = \tfrac 6 5$.
    \end{enumerate}
Graphs $G_{5,3}$ and $G_{5,4}$ are those given in Figure~\ref{fig:2Graphs}. 
\end{main}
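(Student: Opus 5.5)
The proof splits into two halves for each of the five matroids: a lower bound via an explicit weight function, and an upper bound via an averaging argument over a well-chosen family of admissible triples. First I would confirm the list itself. By Seymour's decomposition, a maximal simple regular matroid of rank at most five is graphic, cographic, or $R_{10}$, since 3-sums only produce new maximal objects starting in rank six. Maximal simple graphic matroids of rank $d$ are $M(K_{d+1})$. Maximal simple cographic matroids of rank $d$ that are not graphic come from cubic-type graphs with cyclomatic number $d$: namely $K_{3,3}$ for $d=4$, and $G_{5,3}$, $G_{5,4}$ for $d=5$. In rank at most five every maximal simple cographic matroid is maximal regular, and I take this as given from the classification cited in the paper.

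\textbf{Lower bounds.} For each $M$, I would choose a weight $\mu$ invariant under $\operatorname{Aut}(M)$, uniform on each orbit of elements. For $K_{d+1}$, $K_{3,3}$ and $R_{10}$ the automorphism group is edge-transitive, so I take $\mu$ uniform. Then I find the cheapest admissible triple. In $M(K_{d+1})$ the cocircuits are edge cuts $\delta(S)$, and the cheapest are vertex stars of size $d$. Three distinct stars satisfy non-inclusion and give $3d/\binom{d+1}{2}=6/(d+1)$. I need to check that no admissible triple is cheaper, which holds because every cut has size at least $d$.

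For $M^*(K_{3,3})$ the cocircuits are the cycles of $K_{3,3}$, of sizes $4$ and $6$. Three distinct $4$-cycles cost $12/9=4/3$, and I would verify that three $4$-cycles can be chosen with none contained in the union of the other two. For $R_{10}$ the cocircuits have sizes $4$ and $6$ among $10$ elements, giving $12/10=6/5$. For $G_{5,3}$ and $G_{5,4}$ a uniform weight is not optimal, since the introduction notes that $\sys^*>g^*/|E|$ there. So I would put two weights $a,b$ on the two edge orbits and optimize $\min$ over triple costs, which is a small linear program in $a/b$, to reach $12/11$ and $9/8$.

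\textbf{Upper bounds.} For an arbitrary probability $\mu$, I would exhibit a family $\mathcal{T}$ of admissible triples together with nonnegative coefficients such that every element of $E$ is covered the same total number of times, say $\lambda$, with total coefficient $N$. Averaging then gives
\[\min_{T\in\mathcal{T}}\mu(T)\le \lambda/N.\]
For $K_{d+1}$, take all triples of vertex stars: each edge lies in exactly two stars, so averaging over triples of stars gives $3\cdot 2/(d+1)$. For $K_{3,3}$, $R_{10}$ and the $G_{5,i}$, take $\mathcal{T}$ to be an automorphism orbit, or a weighted combination of orbits, of triples of shortest cocircuits. The LP duality between this covering argument and the lower-bound weight tells me exactly which orbits must appear. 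In particular, the upper bound must use a mix of triples whose coefficients balance the coverage of the two edge orbits, which is the dual certificate to the optimal $(a,b)$.

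\textbf{Main obstacle.} I expect the two cographic rank-five cases to be hardest. They require enumerating the short cycles of $G_{5,3}$ and $G_{5,4}$, identifying the admissible triples, and solving the primal and dual LPs exactly so that the values match at $12/11$ and $9/8$. I would start by listing the edge orbits and the cycle lengths, then guess the dual certificate from complementary slackness using the triples that are tight at the optimal weight.
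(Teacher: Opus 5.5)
\textbf{The transitive cases work; the rank-five cographic cases have a genuine gap.} Your overall scheme matches the paper's: an explicit weight gives the lower bound, and a uniform cover by admissible triples gives the upper bound. For $M(K_{d+1})$, $M^*(K_{3,3})$ and $R_{10}$, your orbit version is a valid and slightly slicker substitute for the paper's explicit groupings. Since $\operatorname{Aut}(M)$ is transitive on $E$, the orbit of one admissible triple of minimum-size cocircuits covers $E$ evenly. So some triple in it weighs at most $3d/\binom{d+1}{2}$, $12/9$, resp.\ $12/10$, matching the uniform weight. For $R_{10}$ you still must exhibit one such triple, e.g.\ the supports of $v_1+v_2$, $v_2+v_3$, $v_3+v_4$ in the paper's matrix.

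The gap is in items (3) and (4), which carry the real content. There you give neither a weight nor a certificate, only a plan to solve an LP. The concrete premise of that plan is wrong for $G_{5,3}$. That graph is $K_{3,3}$ with one vertex replaced by a triangle $C_0$, and its edges form three orbits, not two: the triangle edges, the three edges leaving the triangle, and the six edges of the remaining $K_{2,3}$. Give these per-edge weights $t,c,k$ with $3t+3c+6k=1$. Let $C,C'$ be distinct $4$-cycles and $D,D'$ distinct $5$-cycles. The admissible triples $\{C_0,C,C'\}$, $\{C_0,D,D'\}$, $\{C,C',D\}$, $\{C_0,C,D\}$ then weigh $3t+8k$, $5t+4c+4k$, $t+2c+10k$, $4t+2c+6k$. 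Setting any two of $t,c,k$ equal forces the smallest of these to be at most $22/21<12/11$. So no two-valued weight reaches the claimed value. The extremal weight is $(t,c,k)=(4,1,3)/33$, which makes the triangle, the three $4$-cycles and the six $5$-cycles all weigh exactly $12/33$.

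\textbf{Upper-bound certificates.} Your recipe of ``triples of shortest cocircuits'' fails in both graphs. The certificates must contain the $5$-cycles, which are tight only for the extremal weight, and they need multiplicities.
\begin{itemize}
\item For $G_{5,4}$ the lower-bound weight is $1/16$ on the rim and $2/16$ on the rungs, so all $4$- and $5$-cycles weigh $6/16$. The paper takes each of the four $4$-cycles twice and each of the eight $5$-cycles once. This covers every edge exactly six times. It splits these $16$ cycles into four groups, each of two $5$-cycles and two $4$-cycles, in which any three are admissible. This gives $\tfrac34\cdot\tfrac64=\tfrac98$.
\item For $G_{5,3}$ the paper uses $2\mu(C_0)+\sum_{i=1}^3\mu(C_i)+\sum_{i=1}^6\mu(D_i)=4$. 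It splits these cycles into one admissible triple $\{C_j,D_i,D_{i+1}\}$ and two quadruples $\{C_0,C_j,D_i,D_{i+1}\}$. Here $D_i,D_{i+1}$ use the same triangle edge and $C_j\not\subseteq D_i\cup D_{i+1}$. Weighted averaging then gives $4/(1+\tfrac43+\tfrac43)=\tfrac{12}{11}$.
\end{itemize}
Until you supply such primal--dual pairs and verify admissibility, items (3) and (4) are unproved.

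\textbf{A minor point.} The introduction's remark concerns $\sys^*$, not $\sys_3^*$. That uniform weights are suboptimal here must be checked directly: they give only $11/12$ and $1$.
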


\begin{figure}
\begin{subfigure}{0.32\textwidth}
\centering
\begin{tikzpicture}[scale=.75]
\def\rhex{1}
\def\rout{2}
\def\n{6}
\draw[line width=1pt] (0,0) circle (\rout);
\foreach \i in {1,...,\n}{
    \node[circle,fill=black,inner sep=1.5pt]
        (H\i) at ({60*(\i-1)}:\rhex) {};
}
\foreach \i [evaluate=\i as \j using {int(mod(\i,\n)+1)}] in {1,...,\n}{
    \draw[line width=1pt] (H\i) -- (H\j);
}
\foreach \i in {1,...,\n}{
    \draw[line width=1pt]
        (H\i) -- ({60*(\i-1)}:\rout);
}
\node[circle,fill=black,inner sep=1.5pt]
    (TR) at ($(H1)!0.5!(H2)$) {};

\node[circle,fill=black,inner sep=1.5pt]
    (BR) at ($(H6)!0.5!(H1)$) {};
\draw[line width=1pt] (BR)to[in=-135,out=135](TR);
\end{tikzpicture}
\subcaption*{$G_{5,3}$}
\end{subfigure}
    \begin{subfigure}{0.32\textwidth}

\centering
\begin{tikzpicture}[scale=.75]

\def\rhex{1}
\def\rhexy{.75}
\def\rout{2}
\def\n{8}
\draw[line width=1pt] (0,0) circle (\rout);
\foreach \i in {1,...,\n}{
    \node[circle,fill=black,inner sep=1.5pt]
    (H\i) at ({360/8*(\i-1)}:\rhex){} ;
}

\foreach \i in {1,...,\n}{
    \node[circle, fill=none,inner sep=1.5pt]
    (G\i) at ({360/8*(\i-1)}:\rhexy){\tiny $\i$} ;
}

\foreach \i [evaluate=\i as \j using {int(mod(\i,\n)+1)}] in {1,...,\n}{
    \draw[line width=1pt] (H\i) -- (H\j);
}
\foreach \i in {1,...,\n}{
    \draw[line width=1pt]
        (H\i) -- ({360/8*(\i-1)}:\rout);
}
\end{tikzpicture}
\subcaption*{$G_{5,4}$}
\end{subfigure}
\caption{Graphs $G_{5,3}$ and $G_{5,4}$ drawn on $\mathbb{P}_2$}\label{fig:2Graphs}
\end{figure}

Notice that the estimate $\sys_3^*(M) \leq 1$ fails for ranks three, four, and five. Note also that $\sys_3^*(M)$ is not defined in general for ranks one or two since, for example, there is no triple of cocircuits in $M^*(K_3)$ satisfying the non-inclusion property.  This underscores the observations made in \cite{KWW2} that the proof method for the geometric results does not work if one lowers the torus rank below six.

\subsection*{Organization}
In Section \ref{sec:list}, we list all regular matroids of rank at most six, give embeddings of the relevant graphs in the maximal simple cographic cases, and prove exactly one of the maximal simple cographic matroids is not maximal simple regular. 
In Section \ref{sec:HigherCogirths}, we give precise definitions of the three-cosystole and prove Theorem \ref{thm:monotonicity}. 
In Section \ref{sec:estimates}, we prove the desired estimates in the maximal simple regular case, thereby proving Theorems \ref{thm:estimates} and \ref{thm:estimates<6}. As stated earlier, Theorem \ref{thm:3-cogirth} follows immediately.

\subsection*{Acknowledgements}
Part of this work was completed as part of the second author's PhD thesis, and she is grateful for support from NSF Grant DMS-2005280. The third author is partially supported by NSF Grant DMS-2402129.

\vspace{1em}
\section{Regular matroids of rank at most six}
\label{sec:list}
\vspace{1em}

Regular matroids play an important role in matroid theory and have been characterized in multiple ways. A deep and constructive characterization of regular matroids is due to Seymour \cite{Seymour80}: {\it Regular matroids are built from $1$-sums, $2$-sums, and $3$-sums of graphic matroids, cographic matroids, and the sporadic matroid $R_{10}$}. In small ranks, it is possible to use Seymour's classification to construct an explicit list of regular matroids. We do this in ranks up to six in this section.

We first note that adding loops or elements in parallel to existing elements of the ground set makes the ground set larger while preserving the rank and regularity of the matroid. As these extensions are trivial and lead to an infinite set of objects, we restrict to {\it simple} matroids to avoid loops or parallel elements.

In rank one, the uniform matroid $U_{1,1} = M(K_2)$ is the only simple matroid, and therefore the only simple regular matroid.

In rank two, there are two simple regular matroids: the direct sum $U_{2,2}=M(K_2) \oplus M(K_2)$ with two elements and the graphic matroid $U_{2,3}=M(K_3)$ with three. Note that the first of these is isomorphic to $M(P_2)$ where $P_2$ is the two-edge path and is therefore isomorphic to a restriction of $M(K_3)$.

In rank three, there are five simple regular matroids. All of them are graphic and representable as the graphic matroid $M(G)$ for some subgraph $G \subseteq K_4$. 

In rank four, there are 17 simple regular matroids. All but one is a restriction of $M(K_5)$, and the missing one is the cographic (and non-graphic) matroid $M^*(K_{3,3})$ (see \cite{ErdahlRyshkov94,Nienhaus-pre}).

In rank five, there are many simple regular matroids. As in smaller ranks, most are restrictions of the maximal graphic matroid $M(K_6)$. There are additionally three non-graphic simple matroids that are maximal in the sense of not being restrictions of simple regular matroids of rank six. They are the cographic matroids $M^*(G_{5,3})$ and $M^*(G_{5,4})$ and the sporadic matroid $R_{10}$, where $G_{5,3}$ and $G_{5,4}$ are the unique cubic, non-planar graphs on eight vertices with girth $3$ and $4$, respectively (see Figure~\ref{fig:2Graphs}), and where $R_{10}$ is the unique $10$-element matroid that is regular but neither graphic nor cographic (see \cite{ErdahlRyshkov94} and \cite[Example 1.7]{KWW2}). 

In view of these lists, it is desirable to restrict our attention to {\it maximal simple regular matroids}. The number of these in ranks one through five is $1$, $1$, $1$, $2$, and $4$. In rank six, \cite{DanilovGrishukhin99} (see also \cite{Israel-PhD}) proved the following.

\begin{theorem}[Maximal simple regular matroids in rank six]\label{thm:MSR}
Every simple regular matroid of rank six is contained in one of the following:
	\begin{itemize}
	\item the graphic matroid $M(K_7)$, 
	\item the cographic matroid $M^*(G_i)$ for some $i \in \{1, \ldots, 9\} \setminus \{7\}$, where the graphs $G_i$ are shown in Figure \ref{fig:Graphs},
	\item the $12$-element parallel connection $P(M(K_3), R_{10})$ of $M(K_3)$ and $R_{10}$, which is represented by the matrix in \eqref{equ:A12}, or
	\item the $16$-element generalized parallel connection $P_T(M(K_5), M^*(K_{3,3}))$ of $M(K_5)$ and $M(K_{3,3})$ across a triangle $T$, which is represented by the matrix in \eqref{equ:A16}.
	\end{itemize}
\end{theorem}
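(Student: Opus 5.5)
We would prove Theorem~\ref{thm:MSR} by Seymour's decomposition theorem, organized by connectivity. Let $M$ be a simple regular matroid of rank six. If $M$ is not connected, then $M = M_1 \oplus M_2$ with ranks $r_1 + r_2 = 6$ and $r_i \leq 5$. By the lists in ranks at most five, each $M_i$ is a restriction of $M(K_{r_i+1})$ or of one of $M^*(K_{3,3})$, $M^*(G_{5,3})$, $M^*(G_{5,4})$, $R_{10}$. A direct sum can then be absorbed into a parallel connection along a single element, or into a graph obtained by gluing at a vertex, which has one more rank to spare. For instance, $M(K_{a+1}) \oplus M(K_{b+1})$ is a restriction of $M(K_7)$, and $R_{10} \oplus U_{1,1}$ is a restriction of $P(M(K_3), R_{10})$. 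The $2$-sum case is handled the same way, since a $2$-sum is a restriction of the corresponding parallel connection. So it suffices to treat $3$-connected $M$, together with the parallel connections built from the rank $\leq 5$ maximal pieces. We would check that each such parallel connection lands in the list; for a graphic or cographic piece, this means realizing it as a restriction of a graphic or cographic matroid on a $2$-sum of graphs.

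For $3$-connected $M$, Seymour's theorem says that $M$ is graphic, cographic, isomorphic to $R_{10}$ (impossible in rank six), or has a $3$-separation giving $M = M_1 \oplus_3 M_2$ with proper minors. In the graphic case, $M \subseteq M(K_7)$ trivially, because a simple graph with rank six has at most seven vertices per component after identifying components. In the cographic case, $M = M^*(G)$ with $G$ a $3$-connected graph of corank six, meaning $|E(G)| - |V(G)| + 1 = 6$. Simplicity of $M^*(G)$ means $G$ has no $2$-edge-cuts. We would add edges in the dual sense, which amounts to splitting vertices and adding an edge between the halves, and this preserves $\beta_1 = 6$. Doing so until the graph is cubic and cannot be further extended, we reduce to enumerating cubic $3$-edge-connected graphs with $\beta_1 = 6$, i.e.\ $10$ vertices and $15$ edges. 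We would then discard those whose cographic matroid is already graphic (the planar ones, dual to restrictions of $M(K_7)$) and those which are restrictions of another. This enumeration produces the graphs $G_1, \dots, G_9$.

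In the $3$-sum case, the two pieces have ranks $r_1 + r_2 = 6 + 2 = 8$ over a shared triangle $T$, so both have rank at most five and rank at least three. Thus $M$ is a restriction of a generalized parallel connection $P_T(N_1, N_2)$ with each $N_i$ maximal of rank $r_i$ and containing a triangle. The rank-five maximal pieces paired with rank three $M(K_4)$ give $P_T(N, M(K_4))$. Since $R_{10}$ has no triangles, it only contributes via the parallel connection with $M(K_3)$. The rank pair $(4,4)$ gives $P_T(M(K_5), M(K_5))$, which is graphic and hence inside $M(K_7)$, and $P_T(M(K_5), M^*(K_{3,3}))$. Note that $M^*(K_{3,3})$ has no triangle, so that piece should be read as $M(K_{3,3})$-type gluing, and we need to check the triangle structure carefully. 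Each remaining pairing must be shown to be a restriction of something on the list.

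The main obstacle is the cographic step: enumerating the cubic graphs with $\beta_1 = 6$, showing maximality among them, and in particular identifying that $M^*(G_7)$ is a restriction of one of the non-cographic entries, presumably $P_T(M(K_5), M^*(K_{3,3}))$. We would verify this by exhibiting an explicit representing submatrix of \eqref{equ:A16}, and this is the step we expect to require the most care.
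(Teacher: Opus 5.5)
Your outline follows the paper's route. It uses Seymour's decomposition, places the graphic case inside $M(K_7)$, and reduces the cographic case by vertex splitting to cubic $3$-edge-connected non-planar graphs with $15$ edges. It absorbs $1$- and $2$-sums into parallel connections of lower-rank maximal pieces and builds $3$-sums from pieces of ranks $(3,5)$ or $(4,4)$. The delicate point $M^*(G_7)\cong R_{16}\setminus 7$ is checked via the fundamental-cut representation against $A_{16}$, exactly as in Lemma~\ref{lem:MSCnotMSR}.

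\textbf{The claim about $M^*(K_{3,3})$ is false.} Your $3$-sum paragraph says $M^*(K_{3,3})$ has no triangle, and this is exactly where the one $3$-connected, non-graphic, non-cographic entry arises. $M^*(K_{3,3})$ \emph{does} have triangles: its circuits are the bonds of $K_{3,3}$, and each of the six vertex stars is a $3$-bond. It is $M(K_{3,3})$ that has none, and it has rank five, so it could not be a rank-four piece anyway. The ``$M(K_{3,3})$'' in the statement is a slip for $M^*(K_{3,3})$. In $A_{16}$, the $M(K_5)$ sits on columns $3,4,7,8,11,12,13,14,15,16$, the $M^*(K_{3,3})$ sits on columns $1,2,5,6,9,10,13,14,15$, and $T=\{13,14,15\}$ is a vertex star of $K_{3,3}$. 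Taken literally, your remark discards the pairing $(M(K_5),M^*(K_{3,3}))$. That leaves $3$-sums such as $R_{12}=M(K_5\setminus e)\oplus_3 M^*(K_{3,3})$ unaccounted for.

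\textbf{The enumeration must go triangle by triangle.} Checking piece by piece is not enough, because triangles of $M^*(G)$ are arbitrary $3$-bonds of $G$. Gluing along vertex stars does stay cographic. Comparing cut spaces gives $P_T(M^*(G),M^*(H))=M^*(K)$, where $K$ is obtained from $G-v$ and $H-w$ by making each $t\in T$ an edge between its end in $G-v$ and its end in $H-w$. This handles $P_T(M^*(K_{3,3}),M^*(K_{3,3}))$, which you omitted from the $(4,4)$ case. It also handles the vertex-star gluings of $M^*(G_{5,3})$ and $M^*(G_{5,4})$ with $M(K_4)$.

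However, $M^*(G_{5,3})$ has one non-star triangle: the $3$-bond around the triangle $C_0$ of $G_{5,3}$. Cutting $G_{5,3}$ along it shows $M^*(G_{5,3})=P_T(M(K_4),M^*(K_{3,3}))$. Gluing another $M(K_4)$ along that triangle therefore yields $P_T(M(K_5)\setminus e,M^*(K_{3,3}))\cong R_{16}\setminus 16$. This contains $R_{12}$ and is not cographic. So a $(3,5)$ pairing genuinely lands in $R_{16}$ rather than in the graphic or cographic families. With these corrections, your case analysis recovers Lemma~\ref{lem:3sum} and the containment statement.
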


The last two matroids are a $2$-sum and a $3$-sum in the sense of Seymour, except that we first add an edge or three in parallel before performing the sum. In the notation of \cite[Sections 7.1 and 11.4]{Oxley-book}, these are called the parallel connection and generalized parallel connection over a triangle, respectively. Unless otherwise stated, $T$ will represent a triangle, that is, $T\cong U_{2,3}$ in the generalized parallel connection. A representation of $P(M(K_3),R_{10})$ over $\mathbb{Z}_2$ is given by the matrix $A_{12}$ in (\ref{equ:A12}). We note that in (\ref{equ:A12}) the submatrix corresponding to rows $v_1$ to $v_5$ and columns $f_2$ to $f_{11}$ is a matrix representation for $R_{10}$, which will be used in Section~\ref{sec:estimates} to prove Theorem~\ref{thm:estimates<6}. 
    \begin{equation}\label{equ:A12}
    A_{12} = 
    \bordersquare{
        &f_0 &f_1 & f_2 & f_3  & f_4 & f_5 & f_6 & f_7 &f_8 &f_9 &f_{10} &f_{11} \cr
        v_0&1 &1 & 0 & 0  & 0 & 0 & 0 & 0 &0 &0 &0  &0 \cr 
        v_1&1 &0 & 1 & 0  & 0 & 0 & 0 & 1 &1 &0 &0  &1 \cr
        v_2&0 &0 & 0 & 1  & 0 & 0 & 0 & 1 &1 &1 &0  &0 \cr
        v_3&0 &0 & 0 & 0  & 1 & 0 & 0 & 0 &1 &1 &1  &0 \cr
        v_4&0 &0 & 0 & 0  & 0 & 1 & 0 & 0 &0 &1 &1  &1 \cr
        v_5&0 &0 & 0 & 0  & 0 & 0 & 1 & 1 &0 &0 &1  &1 \cr
    }
    \end{equation}
The graphs $G_1,\ldots,G_9$, shown in Figure~\ref{fig:Graphs}, are precisely the set of $3$-edge-connected, cubic, non-planar graphs on 15 edges. An important property of eight of these graphs that we use in our computations is that, except for $G_9$, they admit embeddings into the real projective plane $\mathbb P_2$. While this follows by inspection of \cite{GloverHuneke75}, we give explicit embeddings here for simplicity.

\begin{figure}
\centering
\begin{subfigure}{0.32\textwidth}
\centering
\begin{tikzpicture}[scale=.75]

\def\rhex{1}
\def\rout{2}
\def\n{6}
\draw[line width=1pt] (0,0) circle (\rout);
\foreach \i in {1,...,\n}{
    \node[circle,fill=black,inner sep=1.5pt]
        (H\i) at ({60*(\i-1)}:\rhex) {};
}
\foreach \i [evaluate=\i as \j using {int(mod(\i,\n)+1)}] in {1,...,\n}{
    \draw[line width=1pt] (H\i) -- (H\j);
}
\foreach \i in {1,...,\n}{
    \draw[line width=1pt]
        (H\i) -- ({60*(\i-1)}:\rout);
}
\node[circle,fill=black,inner sep=1.5pt]
    (TM) at ($(H2)!0.5!(H3)$) {};
\node[circle,fill=black,inner sep=1.5pt]
    (BL) at ($(H4)!0.5!(H5)$) {};
\node[circle,fill=black,inner sep=1.5pt]
    (BR) at ($(H6)!0.5!(H1)$) {};
\node[circle,fill=black,inner sep=1.5pt]
    (M) at ($(H4)!0.5!(H1)$) {};
\draw[line width=1pt] (BL)--(M);
\draw[line width=1pt] (BR)--(M);
\draw[line width=1pt] (TM)--(M);

\end{tikzpicture}
\subcaption*{$G_1$}
\end{subfigure}
\begin{subfigure}{0.32\textwidth}
\centering
\begin{tikzpicture}[scale=.75]

\def\rhex{1}
\def\rout{2}
\def\n{6}
\draw[line width=1pt] (0,0) circle (\rout);
\foreach \i in {1,...,\n}{
    \node[circle,fill=black,inner sep=1.5pt]
        (H\i) at ({60*(\i-1)}:\rhex) {};
}
\foreach \i [evaluate=\i as \j using {int(mod(\i,\n)+1)}] in {1,...,\n}{
    \draw[line width=1pt] (H\i) -- (H\j);
}
\foreach \i in {1,...,\n}{
    \draw[line width=1pt]
        (H\i) -- ({60*(\i-1)}:\rout);
}

\node[circle,fill=black,inner sep=1.5pt]
    (TR) at ($(H1)!0.5!(H2)$) {};
\node[circle,fill=black,inner sep=1.5pt]
    (TL) at ($(H3)!0.5!(H4)$) {};
\node[circle,fill=black,inner sep=1.5pt]
    (BL) at ($(H4)!0.5!(H5)$) {};
\node[circle,fill=black,inner sep=1.5pt]
    (BR) at ($(H6)!0.5!(H1)$) {};

\draw[line width=1pt]
        (TR) -- ({30}:\rout);
\draw[line width=1pt]
        (TL) -- ({150}:\rout);
\draw[line width=1pt]
        (BL) -- ({210}:\rout);
\draw[line width=1pt]
        (BR) -- ({330}:\rout);

\end{tikzpicture}
\subcaption*{$G_2$}\label{fig:G2}
\end{subfigure}
\begin{subfigure}{0.32\textwidth}
\centering
\begin{tikzpicture}[scale=.75]

\def\rhex{1}
\def\rout{2}
\def\n{6}
\draw[line width=1pt] (0,0) circle (\rout);
\foreach \i in {1,...,\n}{
    \node[circle,fill=black,inner sep=1.5pt]
        (H\i) at ({60*(\i-1)}:\rhex) {};
}
\foreach \i [evaluate=\i as \j using {int(mod(\i,\n)+1)}] in {1,...,\n}{
    \draw[line width=1pt] (H\i) -- (H\j);
}
\foreach \i in {1,...,\n}{
    \draw[line width=1pt]
        (H\i) -- ({60*(\i-1)}:\rout);
}
\node[circle,fill=black,inner sep=1.5pt]
    (TR) at ($(H1)!0.5!(H2)$) {};
\node[circle,fill=black,inner sep=1.5pt]
    (TL) at ($(H3)!0.5!(H4)$) {};
\node[circle,fill=black,inner sep=1.5pt]
    (BL) at ($(H5)!0.33!(H6)$) {};
\node[circle,fill=black,inner sep=1.5pt]
    (BR) at ($(H5)!0.66!(H6)$) {};
\draw[line width=1pt] (BL)to[out=80, in=-30](TL);
\draw[line width=1pt] (BR)to[in=210, out=100](TR);
\end{tikzpicture}
\subcaption*{$G_3$}
\end{subfigure}
\begin{subfigure}{0.32\textwidth}
\centering
\begin{tikzpicture}[scale=.75]
\def\rhex{1}
\def\rout{2}
\def\n{6}
\draw[line width=1pt] (0,0) circle (\rout);
\foreach \i in {1,...,\n}{
    \node[circle,fill=black,inner sep=1.5pt]
        (H\i) at ({60*(\i-1)}:\rhex) {};
}
\foreach \i [evaluate=\i as \j using {int(mod(\i,\n)+1)}] in {1,...,\n}{
    \draw[line width=1pt] (H\i) -- (H\j);
}
\foreach \i in {1,...,\n}{
    \draw[line width=1pt]
        (H\i) -- ({60*(\i-1)}:\rout);
}
\node[circle,fill=black,inner sep=1.5pt]
    (TR) at ($(H2)!0.33!(H3)$) {};
\node[circle,fill=black,inner sep=1.5pt]
    (TL) at ($(H2)!0.66!(H3)$) {};
\node[circle,fill=black,inner sep=1.5pt]
    (BL) at ($(H5)!0.33!(H6)$) {};
\node[circle,fill=black,inner sep=1.5pt]
    (BR) at ($(H5)!0.66!(H6)$) {};
\draw[line width=1pt] (BL)--(TL);
\draw[line width=1pt] (BR)--(TR);
\end{tikzpicture}
\subcaption*{$G_4$}
\end{subfigure}
\begin{subfigure}{0.32\textwidth}
\centering
\begin{tikzpicture}[scale=.75]

\def\rhex{1}
\def\rout{2}
\def\n{6}
\draw[line width=1pt] (0,0) circle (\rout);
\foreach \i in {1,...,\n}{
    \node[circle,fill=black,inner sep=1.5pt]
        (H\i) at ({60*(\i-1)}:\rhex) {};
}
\foreach \i [evaluate=\i as \j using {int(mod(\i,\n)+1)}] in {1,...,\n}{
    \draw[line width=1pt] (H\i) -- (H\j);
}
\foreach \i in {1,...,\n}{
    \draw[line width=1pt]
        (H\i) -- ({60*(\i-1)}:\rout);
}
\node[circle,fill=black,inner sep=1.5pt]
    (TR) at ($(H1)!0.5!(H6)$) {};
\node[circle,fill=black,inner sep=1.5pt]
    (TL) at ($(H3)!0.5!(H4)$) {};
\node[circle,fill=black,inner sep=1.5pt]
    (BL) at ($(H4)!0.5!(H5)$) {};
\node[circle,fill=black,inner sep=1.5pt]
    (BR) at ($(H5)!0.5!(H6)$) {};
\draw[line width=1pt] (BL)to[out=45,in=-45](TL);
\draw[line width=1pt] (BR)to[in=-195,out=75](TR);
\end{tikzpicture}
\subcaption*{$G_5$}
\end{subfigure}
\begin{subfigure}{0.32\textwidth}
\centering
\begin{tikzpicture}[scale=.75]
\def\rhex{1}
\def\rout{2}
\def\n{6}
\draw[line width=1pt] (0,0) circle (\rout);
\foreach \i in {1,...,\n}{
    \node[circle,fill=black,inner sep=1.5pt]
        (H\i) at ({60*(\i-1)}:\rhex) {};
}
\foreach \i [evaluate=\i as \j using {int(mod(\i,\n)+1)}] in {1,...,\n}{
    \draw[line width=1pt] (H\i) -- (H\j);
}
\foreach \i in {1,...,\n}{
    \draw[line width=1pt]
        (H\i) -- ({60*(\i-1)}:\rout);
}
\node[circle,fill=black,inner sep=1.5pt]
    (TR) at ($(H1)!0.5!(H2)$) {};
\node[circle,fill=black,inner sep=1.5pt]
    (TL) at ($(H3)!0.5!(H4)$) {};
\node[circle,fill=black,inner sep=1.5pt]
    (BL) at ($(H4)!0.5!(H5)$) {};
\node[circle,fill=black,inner sep=1.5pt]
    (BR) at ($(H6)!0.5!(H1)$) {};
\draw[line width=1pt] (BL)to[in=-45,out=45](TL);
\draw[line width=1pt] (BR)to[in=-135,out=135](TR);
\end{tikzpicture}
\subcaption*{$G_6$}
\end{subfigure}
\begin{subfigure}{0.32\textwidth}
\centering
\begin{tikzpicture}[scale=.75]
\def\rhex{1}
\def\rout{2}
\def\n{6}
\draw[line width=1pt] (0,0) circle (\rout);

\foreach \i in {1,...,\n}{
    \node[circle,fill=black,inner sep=1.5pt]
        (H\i) at ({60*(\i-1)}:\rhex) {};
}
\foreach \i [evaluate=\i as \j using {int(mod(\i,\n)+1)}] in {1,...,\n}{
    \draw[line width=1pt] (H\i) -- (H\j);
}
\foreach \i in {1,...,\n}{
    \draw[line width=1pt]
        (H\i) -- ({60*(\i-1)}:\rout);
}
\node[circle,fill=black,inner sep=1.5pt]
    (TR) at ($(H4)!0.33!(H5)$) {};
\node[circle,fill=black,inner sep=1.5pt]
    (TL) at ($(H4)!0.66!(H5)$) {};
\node[circle,fill=black,inner sep=1.5pt]
    (BL) at ($(H5)!0.33!(H6)$) {};
\node[circle,fill=black,inner sep=1.5pt]
    (BR) at ($(H5)!0.66!(H6)$) {};
\draw[line width=1pt] (BL)to[in=15,out=105](TL);
\draw[line width=1pt] (BR)to[in=15,out=105](TR);
\end{tikzpicture}
\subcaption*{$G_7$}
\end{subfigure}
\begin{subfigure}{0.32\textwidth}
\centering
\begin{tikzpicture}[scale=.75]
\def\rhex{1}
\def\rout{2}
\def\n{6}
\draw[line width=1pt] (0,0) circle (\rout);
\foreach \i in {1,...,\n}{
    \node[circle,fill=black,inner sep=1.5pt]
        (H\i) at ({60*(\i-1)}:\rhex) {};
}
\foreach \i [evaluate=\i as \j using {int(mod(\i,\n)+1)}] in {1,...,\n}{
    \draw[line width=1pt] (H\i) -- (H\j);
}
\foreach \i in {1,...,\n}{
    \draw[line width=1pt]
        (H\i) -- ({60*(\i-1)}:\rout);
}
\node[circle,fill=black,inner sep=1.5pt]
    (TR) at ($(H3)!0.5!(H4)$) {};
\node[circle,fill=black,inner sep=1.5pt]
    (TL) at ($(H4)!0.5!(H5)$) {};
\node[circle,fill=black,inner sep=1.5pt]
    (BL) at ($(H5)!0.5!(H6)$) {};
\node[circle,fill=black,inner sep=1.5pt]
    (BR) at ($(H6)!0.5!(H1)$) {};
\draw[line width=1pt] (BL)to[in=15,out=105](TL);
\draw[line width=1pt] (BR)--(TR);
\end{tikzpicture}
\subcaption*{$G_8$}
\end{subfigure}
\begin{subfigure}{0.32\textwidth}
\centering
\begin{tikzpicture}[scale=.75]
\def\rhex{1}
\def\rout{2}
\def\n{6}
\draw[line width=1pt] (0,0) circle (\rout);
\foreach \i in {1,...,\n}{
    \node[circle,fill=black,inner sep=1.5pt]
        (H\i) at ({60*(\i-1)}:\rhex) {};
}
\foreach \i [evaluate=\i as \j using {int(mod(\i,\n)+1)}] in {1,...,\n}{
    \draw[line width=1pt] (H\i) -- (H\j);
}
\foreach \i in {1,...,\n}{
    \draw[line width=1pt]
        (H\i) -- ({60*(\i-1)}:\rout);
}
\node[circle,fill=black,inner sep=1.5pt]
    (TR) at ($(H4)!0.33!(H5)$) {};
\node[circle,fill=black,inner sep=1.5pt]
    (TL) at ($(H4)!0.66!(H5)$) {};
\node[circle,fill=black,inner sep=1.5pt]
    (BL) at ($(H5)!0.33!(H6)$) {};
\node[circle,fill=black,inner sep=1.5pt]
    (BR) at ($(H5)!0.66!(H6)$) {};
\draw[line width=1pt] (BL)to[in=15,out=105](TR);
\draw[line width=1pt] (BR)to[in=15,out=105](TL);

\end{tikzpicture}
\subcaption*{$G_9$}
\end{subfigure}
\caption{Graphs $G_1,\dots,G_9$ drawn on $\mathbb{P}_2$}\label{fig:Graphs}
\end{figure}

The proof of Theorem \ref{thm:MSR} was first done in \cite{DanilovGrishukhin99} using the terminology of maximal unimodular systems and was done again in our terminology by the second author in \cite{Israel-PhD}. We sketch the latter proof in this section for three reasons. First, it will establish notation we use later in our computations. Second, it will provide explicit embeddings in all but one of the cographic cases that the underlying graph embeds in the real projective plane, and these graph drawings will be used in our computations. Finally, the proof we include here clarifies the distinction between cographic matroids that are maximal among simple cographic matroids and those that are maximal among all simple regular matroids. 

The proof of Theorem \ref{thm:MSR} rests on Seymour's theorem. The first case covers graphic matroids. It is immediate to see that if $H \subseteq G$ is an inclusion of simple graphs with the same vertex set, then there is an inclusion $M(H) \subseteq M(G)$ of the corresponding cycle matroids. In particular, we have the following result usually attributed to Heller \cite{Heller57}(see also \cite[2.1]{DanilovGrishukhin99} and \cite[Theorem 32]{Israel-PhD}).

\begin{lemma}[Graphic case]\label{lem:graphic}
A matroid $M(G)$ is maximal among simple graphic matroids of rank $d$ if and only if $G = K_{d+1}$.
\end{lemma}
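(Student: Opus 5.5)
The claim is that $M(G)$ is maximal among simple graphic matroids of rank $d$ exactly when $G = K_{d+1}$. The plan is to reduce everything to graphs on $d+1$ vertices and then use the edge-inclusion observation made just before the statement.

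\textbf{Reduction to connected graphs.} First, every graphic matroid is isomorphic to the cycle matroid of a connected graph. If $G$ has several components, identify one vertex from each component; this is a sequence of $1$-sums and does not change the cycle matroid. Then $M(G)$ has rank $d$ if and only if the connected graph $G$ has $d+1$ vertices, since a spanning tree has $|V(G)|-1$ edges. Simplicity of $M(G)$ means $G$ has no loops and no parallel edges, so $G$ is a simple graph. Hence any simple graphic matroid of rank $d$ is $M(G)$ for a simple connected graph $G$ on $d+1$ vertices.

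\textbf{If $G$ is not complete.} Identify the vertex set of $G$ with that of $K_{d+1}$, so that $G \subseteq K_{d+1}$. By the observation preceding the statement, $M(G)$ is a restriction of $M(K_{d+1})$. The matroid $M(K_{d+1})$ is simple, graphic and of rank $d$. So if $G \neq K_{d+1}$, then $M(G)$ is a proper restriction of another simple graphic matroid of rank $d$ and is not maximal.

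\textbf{If $G = K_{d+1}$.} Suppose $M(K_{d+1})$ is a restriction of a simple graphic matroid $N$ of rank $d$. By the reduction, $N = M(H)$ for a simple connected graph $H$ on $d+1$ vertices. Such an $H$ has at most $\binom{d+1}{2}$ edges, and this is exactly $|E(M(K_{d+1}))|$. The restriction therefore uses the whole ground set, so $N = M(K_{d+1})$, and $M(K_{d+1})$ is maximal.

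The only step needing care is the reduction to connected graphs: we need that identifying vertices across components preserves the matroid. This is standard (Whitney), so no real obstacle is expected.
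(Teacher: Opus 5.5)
Your proof is correct and follows the same route as the paper, which rests the lemma on the single observation that a simple graph on $d+1$ vertices is a spanning subgraph of $K_{d+1}$, so its cycle matroid is a restriction of $M(K_{d+1})$, and otherwise cites Heller. You also supply the details the paper leaves implicit, and both are sound: the reduction to connected simple graphs on $d+1$ vertices, and the edge count $\binom{d+1}{2}$ showing that $M(K_{d+1})$ is itself maximal.
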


The second building block in Seymour's theorem are the cographic matroids. We now list the underlying graphs for the maximal simple cographic matroids (see \cite[Lemmas 2 and 3]{DanilovGrishukhin99} and \cite[Theorem 34 and Lemma 35]{Israel-PhD}): 

\begin{lemma}[Cographic case]\label{lem:cographic}
A matroid $M^*(G)$ is maximal among simple cographic matroids of rank $d$ and is not graphic if and only if $G$ is a $3$-edge-connected, cubic, and non-planar graph with $3(d-1)$ edges.

In particular, the non-graphic and maximal simple cographic matroids of ranks four, five, and six are $M^*(K_{3,3})$, $M^*(G_{5,g})$ for $g \in \{3,4\}$, and $M^*(G_i)$ for $i \in \{1, \ldots, 9\}$, respectively, where the graphs $G_{5,g}$ and $G_i$ are shown in Figures~\ref{fig:2Graphs}~and~\ref{fig:Graphs}, respectively.
\end{lemma}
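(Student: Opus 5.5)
The plan is to pass to the graph $G$ and use the standard dictionary for cographic matroids. The circuits of $M^*(G)$ are the minimal edge cuts (bonds) of $G$, and $r(M^*(G)) = |E(G)| - |V(G)| + c(G)$, the cyclomatic number.

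\emph{Simplicity.} $M^*(G)$ is simple exactly when $G$ has no bond of size one or two, that is, no bridge and no $2$-edge cut. We may also assume $G$ is connected and has no isolated vertices, since disconnecting $G$ does not change the cographic matroid in an essential way. So we may work with $2$-edge-connected, and in fact $3$-edge-connected, graphs.

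\emph{Extensions as graph operations.} An extension of $M^*(G)$ of equal rank adds an element to the dual matroid, which is a coextension of $M(G)$ and raises its rank by one. For cographic extensions this corresponds to \emph{splitting a vertex}: replace a vertex $v$ by two adjacent vertices $v',v''$, joined by a new edge, and distribute the old edges at $v$ between them. This raises $|V|$ and $|E|$ by one each, so $d$ is unchanged. The new matroid stays simple precisely when each of $v',v''$ keeps degree at least $3$ and no $2$-edge cut is created. Hence:
\begin{itemize}
\item[(i)] $M^*(G)$ is maximal among simple cographic matroids iff no vertex of degree $\ge 4$ admits such a split, which, given $3$-edge-connectivity, means every vertex has degree at most $3$.
\item[(ii)] Since every vertex has degree at least $3$, this means $G$ is cubic.
\end{itemize}
For the converse, when $G$ is cubic and $3$-edge-connected, any other simple cographic matroid $M^*(H)\supseteq M^*(G)$ of the same rank has $H/e=G$ for some edge $e$, up to $2$-isomorphism via Whitney. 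So $G$ arises from $H$ by contracting an edge, and $H$ would need a vertex of degree $\le 2$ or a small cut, contradicting simplicity. One point needs care here: the containment $M^*(G)\subseteq M^*(H)$ must be realized by a graph operation. I plan to handle this by noting that $M^*(H)\backslash e = M^*(H/e)$ together with Whitney's $2$-isomorphism theorem, which applies since $3$-connectedness can be arranged.

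\emph{Edge count.} For a cubic graph $2|E|=3|V|$, so $d=|E|-|V|+1=|E|/3+1$, giving $|E|=3(d-1)$.

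\emph{The non-graphic condition.} $M^*(G)$ is graphic iff $G$ is planar (Whitney). In the planar case the cographic matroid is a restriction of $M(K_{d+1})$ by Lemma~\ref{lem:graphic}, so it is not maximal in the non-graphic sense intended. This yields the ``if and only if''.

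\emph{The enumeration.} It remains to list the $3$-edge-connected cubic non-planar graphs with $6$, $12$, and $15$ edges:
\begin{itemize}
\item $6$ vertices: only $K_{3,3}$ among the cubic graphs on six vertices (the other is the prism, which is planar).
\item $8$ vertices: the non-planar cubic graphs are exactly the two shown in Figure~\ref{fig:2Graphs}, distinguished by girth. Any others have a $2$-edge cut or are planar.
\item $10$ vertices: there are $19$ connected cubic graphs; discarding the planar ones and those with bridges or $2$-edge cuts leaves the nine $G_i$.
\end{itemize}
The main obstacle is this last enumeration. I would rely on the known census of cubic graphs and check $3$-edge-connectivity and non-planarity, exhibiting a $K_{3,3}$ subdivision for each, as in \cite{Israel-PhD}, rather than deriving the list from first principles.
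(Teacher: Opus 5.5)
\textbf{Gap: the existence of a good vertex split in step (i) is asserted, not proved.} The paper does not prove this lemma; it cites Danilov--Grishukhin and the second author's thesis. So your self-contained route is a reasonable alternative, and most of it is sound:
\begin{itemize}
\item the dictionary between equal-rank cographic extensions and vertex splits;
\item the converse via Whitney's uniqueness theorem for $3$-connected graphs;
\item the count $|E|=3(d-1)$ and Whitney's planarity criterion;
\item the enumeration, which you, like the paper, outsource (and the counts you quote are right).
\end{itemize}
The problem is step (i). You assert that, because $G$ is $3$-edge-connected, a vertex $v$ of degree at least $4$ can be split into $v',v''$ without creating a bond of size at most two. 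This is the entire content of the ``only if'' direction, and it is not automatic. If $v'$ receives exactly the edges from $v$ into one side of a bridge $b$ of $G-v$, then $\{e,b\}$ is a $2$-edge cut of the new graph $H$. Likewise, if $G-v$ is disconnected and $v'$ receives all but one of the edges from $v$ into a component $K$ and nothing outside $K$, then $e$ and the leftover edge form a $2$-edge cut. So you must show that some split avoids all such cuts.

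\textbf{One way to close it.} First assume $G$ is loopless. Give $v'$ exactly two edges $f_1,f_2$ at $v$, and give $v''$ the rest. A bond of $H$ with at most two edges must contain $e$ and separate $v'$ from $v''$. Let $X$ be the set of vertices of $G-v$ on the side of $v'$. The cases $X=\emptyset$ and $X=V(G)-v$ give cuts of size $3$ and $\deg v-1\ge 3$. Otherwise, using $|\delta_G(X)|\ge 3$ and $|\delta_G(X\cup\{v\})|\ge 3$, a short count shows one of two things happens:
\begin{itemize}
\item $G-v$ has exactly one edge between $X$ and its complement, and the edges from $v$ into $X$ are exactly $f_1,f_2$; or
\item $G-v$ has no such edge, and $v$ sends exactly three edges into $X$, two of them $f_1,f_2$.
\end{itemize}
Now choose $f_1,f_2$ as follows:
\begin{itemize}
\item if $G-v$ is disconnected, with ends in different components (each component receives at least three edges from $v$);
\item if $G-v$ is connected with a bridge, with ends in two different leaf blocks of its bridge-block tree (each leaf block receives at least two edges from $v$);
\item if $G-v$ is $2$-edge-connected, arbitrarily.
\end{itemize}
In the first two cases $X$ would have to contain a whole component, respectively both leaf blocks, and so would receive too many edges from $v$. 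In the third case every cut of $G-v$ has at least two edges, so neither configuration arises.

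\textbf{Smaller points.}
\begin{itemize}
\item Loops of $G$ (coloops of $M^*(G)$) are not excluded by simplicity, and they spoil the degree count that yields cubicity. A loop at $v$ needs its own split: turn it into an edge parallel to $e$, with $v'$ receiving exactly one other edge.
\item For Whitney's theorem and for applying the census of simple cubic graphs, record that a $3$-edge-connected cubic multigraph on at least four vertices is simple and $3$-connected.
\item Your remark that planar $G$ is ``not maximal in the non-graphic sense'' is muddled. Planar cubic $3$-connected graphs do give maximal simple cographic matroids; they are excluded only by the ``not graphic'' clause. The ``if and only if'' is just the conjunction of your characterization with Whitney's criterion.
\end{itemize}
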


As we show below, the maximal simple graphic matroid $M(K_7)$ and the maximal simple cographic matroids $M^*(G_i)$ for $i \in \{1,\ldots, 9\} \setminus \{7\}$ are maximal among all simple regular matroids of rank six while $M^*(G_7)$ is not. 

Continuing with Seymour's theorem, we see the sporadic matroid $R_{10}$ precisely when the rank is five. Thus we move on to summarizing the maximal simple regular matroids that arise as $k$-sums for $k \in \{1, 2, 3\}$. Here we have the following structure theorems:

\begin{lemma}[$1$-sums]
No maximal simple regular matroid is a $1$-sum.
\end{lemma}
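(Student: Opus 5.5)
The plan is to show directly that any simple regular $1$-sum $M = M_1 \oplus M_2$ of rank $d$ is a proper restriction of a larger simple regular matroid of the same rank. Such an $M$ therefore cannot be maximal. The natural enlargement is a parallel connection of the two summands along a new element, or more simply a $2$-sum after adding a basepoint to each side.

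First I will record the rank bookkeeping. If $M = M_1 \oplus M_2$ with both ground sets nonempty, then $r(M) = r(M_1) + r(M_2)$. Since $M$ is simple it has no loops, so each $M_i$ has rank at least one. I then pick elements $p_1 \in E(M_1)$ and $p_2 \in E(M_2)$; these are not loops, so they are nonzero vectors in a representation.

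Next comes the construction. I take totally unimodular (or $\mathbb{Z}_2$/oriented) representations $A_1, A_2$ of $M_1, M_2$ and form the block-diagonal matrix $\begin{pmatrix} A_1 & 0 \\ 0 & A_2 \end{pmatrix}$, which represents $M$. I then adjoin one new column $v = \begin{pmatrix} a_1 \\ a_2 \end{pmatrix}$, where $a_i$ is the column of $p_i$. The resulting matroid $N$ is the parallel connection $P(M_1', M_2')$, where $M_i'$ is $M_i$ with an element added in parallel to $p_i$; equivalently, it is the series/parallel extension obtained by placing a new element $q$ in a triangle with $p_1, p_2$. Regularity is preserved because parallel connections of regular matroids are regular (Oxley, Section 7.1). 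Alternatively, one checks total unimodularity directly, since $v$ is the sum of two columns with disjoint supports, and appending a sum of disjoint-support columns of a TU matrix keeps it TU after pivoting on $p_1$. The rank is unchanged because $v$ lies in the column span.

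The last step is simplicity, and this is where the only real care is needed. The new column $v$ must not be parallel to any existing column. This holds because $v$ has nonzero support in both blocks, while every old column is supported in only one block. So $N$ is a simple regular matroid of rank $d$ properly containing $M$, and $M$ is not maximal. The main point to verify carefully is regularity of the extension: it is routine but must be justified, either through the closure of regular matroids under parallel connection or by a pivoting argument showing that the augmented matrix remains totally unimodular.
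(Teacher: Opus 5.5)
Your construction is the right one: adjoin the column $p_1+p_2$ to the block-diagonal representation. The paper gives no argument of its own for this lemma and only cites Danilov--Grishukhin and the second author's thesis, and your construction is the natural proof. Your rank and simplicity checks are correct. However, the step you yourself flag as the crux, regularity of $N$, is not correctly justified by either of the two arguments you give.

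\textbf{The parallel-connection identification.} $N$ is not $P(M_1',M_2')$. Any parallel connection of $M_1'$ and $M_2'$ has rank $r(M_1')+r(M_2')-1=d-1$. Moreover, if the two new parallel elements are glued into $q$, then $q$ becomes parallel to both $p_1$ and $p_2$, which is the opposite of what your matrix does. The vaguer ``$2$-sum after adding a basepoint to each side'' has the same defect if the basepoints are added in parallel: it has rank $d-1$ and makes $p_1,p_2$ parallel. The correct identification uses a triangle $T\cong U_{2,3}$ on $\{p_1,p_2,q\}$: take $N=P\bigl(P(M_1,T),M_2\bigr)$, glued along $p_1$ and then along $p_2$. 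This matroid has rank $(r(M_1)+1)+r(M_2)-1=d$ and satisfies $N\setminus q=M_1\oplus M_2$. It is regular because parallel connections of regular matroids are regular.

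\textbf{The total-unimodularity principle.} The claim that appending the sum of two disjoint-support columns preserves total unimodularity is false. The matrix with columns $e_1,e_2,e_1-e_2$ is TU, but appending $e_1+e_2$ produces a $2\times 2$ minor of determinant $2$; matroidally you get $U_{2,4}$. Pivoting does not change this.

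What actually makes your matrix TU is the block structure. Let $B$ be a square submatrix using rows $R_1\cup R_2$, columns $S_1\cup S_2$, and the new column. Write the new column as $(a_1;0)+(0;a_2)$ and expand $\det B$ linearly in that column.
\begin{itemize}
\item Each of the two resulting determinants is of a block-diagonal matrix.
\item The first vanishes unless $|R_1|=|S_1|+1$, and the second vanishes unless $|R_2|=|S_2|+1$.
\item These cannot both hold, since $|R_1|+|R_2|=|S_1|+|S_2|+1$.
\end{itemize}
So $\det B$ is, up to sign, a minor of $A_1$ (possibly with a repeated column) times a minor of $A_2$, and hence lies in $\{0,\pm 1\}$. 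With either repair in place, your proof is complete.
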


For the proof, see \cite[Page 412]{DanilovGrishukhin99} or \cite[Theorem 65]{Israel-PhD}.

\begin{lemma}[$2$-sums] The following hold for a maximal simple regular matroid $M$. 
\begin{enumerate}[a)]
    \item If $M$ decomposes as a $2$-sum, then $M$ is a parallel connection of maximal simple regular matroids of strictly lower ranks that sum to $d + 1$.
    \item If moreover $M$ is neither graphic nor cographic, then one of those summands is $R_{10}$ without loss of generality.
    \item If $M$ is not graphic, not cographic, and not $3$-connected, then is the parallel connection $P(M(K_3), R_{10})$ of $M(K_3)$ and $R_{10}$.
\end{enumerate}
\end{lemma}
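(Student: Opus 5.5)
We prove the three parts in order. Throughout, $M$ is maximal simple regular of rank $d$ and $M = M_1 \oplus_2 M_2$ is a $2$-sum along a basepoint $p$, so $r(M_1) + r(M_2) = d + 1$ and each $M_i$ has rank strictly less than $d$.

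\textbf{Part a).} The main tool is the standard identity
\[
M_1 \oplus_2 M_2 = P(M_1, M_2) \backslash p .
\]
Here the parallel connection $P(M_1,M_2)$ is regular whenever $M_1$ and $M_2$ are, and it has rank $r(M_1)+r(M_2)-1 = d$. It contains $M$ as a restriction. Maximality of $M$ (together with simplicity, since we may first simplify) then forces $M = \operatorname{si}(P(M_1,M_2))$. In practice this means $p$ was already present up to parallel elements, so $M$ is itself a parallel connection.

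Next we show each $M_i$ may be replaced by a maximal simple regular matroid. Suppose $M_i \subseteq N_i$ with $N_i$ simple regular of the same rank. Then $P(N_1, N_2)$, taken along the image of $p$, is simple regular of rank $d$ and contains $P(M_1, M_2)$. Maximality of $M$ gives equality, hence $M_i = N_i$. The only care needed is that $p$ is not a loop or coloop of $M_i$, which holds because the sum is a genuine $2$-sum.

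\textbf{Part b).} Apply Seymour's decomposition to the summands. Each $M_i$ has rank less than $d$ and is maximal simple regular, so by induction on rank we may assume each summand is graphic, cographic, $R_{10}$, or a parallel connection of such. We then use two closure facts:
\begin{itemize}
\item a parallel connection of graphic matroids is graphic, since it is obtained by gluing graphs along an edge;
\item a parallel connection of cographic matroids is cographic, since it is the dual of a series connection.
\end{itemize}
A mixed graphic/cographic connection is ruled out as follows. Maximal simple graphic summands are $M(K_n)$. Maximal simple cographic non-graphic summands are $M^*(G)$ with $G$ cubic and $3$-edge-connected by Lemma \ref{lem:cographic}. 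A parallel connection of these with $M(K_n)$, $n\ge 3$, is not maximal: it can be extended, for instance by enlarging to a cographic matroid via adding an edge across the triangle. This extension step is the delicate part, so instead we expect to cite \cite[Page 412]{DanilovGrishukhin99} for it. Hence if $M$ is neither graphic nor cographic, some summand must be $R_{10}$.

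\textbf{Part c).} Assume additionally that $M$ is not $3$-connected. Then $M$ is a $2$-sum, so by a) and b) we have $M = P(R_{10}, N)$ with $N$ maximal simple regular. Since $r(R_{10}) = 5$, $r(N) = d + 1 - 5$. We are in rank six, so $r(N) = 2$, and the unique maximal simple regular matroid of rank two is $M(K_3)$. Therefore $M = P(M(K_3), R_{10})$. The connection does not depend on the choice of basepoint because $R_{10}$ is transitive on its elements.

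\textbf{Main obstacle.} The hard step is the non-maximality of mixed graphic/cographic parallel connections in b). We would handle it by exhibiting explicit single-element regular extensions, checking regularity via the matrices $A_{12}$ and $A_{16}$.
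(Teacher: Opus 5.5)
\textbf{Verdict: parts a) and c) are sound, but part b) has a genuine gap as written. In rank six it closes with a rank count.}

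The paper does not prove this lemma; it cites \cite[Theorems 81 and 82]{Israel-PhD} and \cite[Section 7 and Lemma 5]{DanilovGrishukhin99}. A self-contained argument is therefore a legitimate alternative.

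Your part a) works. You should add one line: simplicity of $M$ forces each part of the $2$-sum to have rank at least two, since a rank-one part with three elements would put a parallel pair in $M$. That is what makes the summand ranks strictly smaller than $d$. Part c) follows correctly from a), b), and the $1$-sum lemma.

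\textbf{The gap in b).} Your induction hypothesis is that every maximal simple regular matroid of lower rank is graphic, cographic, $R_{10}$, or a parallel connection of such. This is false once summands can have rank six: $P_T(M(K_5),M^*(K_{3,3}))$ is $3$-connected and is none of these. So for $d\geq 7$, $3$-sum summands are never addressed.

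Your mechanism for the mixed case also cannot work. For $n\geq 5$, $M(K_n)$ is not cographic, so $P(M(K_n),M^*(G))$ is not a restriction of any cographic matroid. Hence ``enlarging to a cographic matroid'' is impossible, and checking against $A_{12}$ and $A_{16}$ says nothing in general rank.

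The extension that does work is a generalized parallel connection. Since $G$ is cubic and $3$-edge-connected, the star of an endpoint of $p$ is a triangle $T$ of $M^*(G)$ through $p$. Then $P(M(K_n),M^*(G))$ is a proper, equal-rank restriction of the regular matroid $P_T(M(K_{n+1}),M^*(G))$: delete the $n-2$ edges joining the third vertex of $T$ to the rest of $K_{n+1}$.

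\textbf{The rank-six fix.} In rank six, which c) presupposes and where the lemma is used, you should replace your ``main obstacle'' paragraph with the following count.
\begin{enumerate}
\item The summand ranks are $\{2,5\}$ or $\{3,4\}$.
\item By Section~\ref{sec:list}, every maximal simple regular matroid of rank at most five is $M(K_{r+1})$, $M^*(K_{3,3})$, $M^*(G_{5,3})$, $M^*(G_{5,4})$, or $R_{10}$. None of these is a parallel connection.
\item A genuinely mixed pair needs a non-cographic graphic summand and a non-graphic cographic summand. Each has rank at least four, so their ranks sum to at least $8>7$, and the mixed case cannot occur.
\item The only small summands are $M(K_3)$ and $M(K_4)$, which are both graphic and cographic. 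Your two closure facts then show that every such parallel connection without an $R_{10}$ summand is graphic or cographic.
\end{enumerate}
This proves b) in rank six, and c) follows immediately.
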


Note that both $M(K_3)$ and $R_{10}$ are homogeneous in the sense that their automorphism groups act transitively on their ground sets. Therefore there is a unique isomorphism class of matroids obtained by taking a parallel connection of these matroids.

For a proof of the first and third statements, see \cite[Theorems 81 and 82]{Israel-PhD}. For a proof of all three statements, see \cite[Section 7]{DanilovGrishukhin99}, and for a proof of the second statement in particular, see \cite[Lemma 5]{DanilovGrishukhin99}. We note here for convenience that not being $3$-connected is equivalent to being a $1$-sum or a $2$-sum. In the maximal simple regular case, it follows that $M$ is a $2$-sum. In addition, maximality implies that the element of the ground set that is removed when constructing the $2$-sum may be put back into the ground set without breaking the simple or regular properties, so one can see why we work with parallel connections instead of $2$-sums.

\begin{lemma}[$3$-sums]\label{lem:3sum}
If $M$ is a maximal simple regular matroid of rank six that is $3$-connected but not graphic or cographic, then $M$ is the generalized parallel connection $P_T(M(K_5), M^*(K_{3,3}))$ of $M(K_5)$ and $M^*(K_{3,3})$.
\end{lemma}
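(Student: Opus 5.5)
We would derive Lemma~\ref{lem:3sum} from Seymour's decomposition together with the rank count for $3$-sums. Since $M$ is $3$-connected, regular, of rank six, and neither graphic nor cographic, and since $R_{10}$ has rank five, $M$ is not sporadic. Seymour's theorem then says $M$ is a $3$-sum $M = M_1 \oplus_3 M_2$ of regular matroids $M_1$ and $M_2$. Each $M_i$ is a proper minor of $M$, contains a common triangle $T$, and has rank at least three.

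\textbf{Step 1: ranks.} For a $3$-sum across a triangle, $r(M) = r(M_1) + r(M_2) - 2$. So $r(M_1) + r(M_2) = 8$, and the possible splits are $\{3,5\}$ and $\{4,4\}$.

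\textbf{Step 2: maximality.} Maximality of $M$ lets us replace the $3$-sum by the generalized parallel connection $P_T(M_1, M_2)$. Indeed, the three elements of $T$ can be restored to the ground set without destroying simplicity or regularity; this is the same reasoning the paper gives after the $2$-sum lemma. It then follows that each $M_i$ may be taken maximal simple regular of its rank subject to containing $T$. We would need the standard fact that $P_T$ of regular matroids is regular and that enlarging a summand enlarges $P_T$. By homogeneity of the relevant small matroids on triangles, the choice of $T$ is essentially forced.

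Using the lists in ranks three, four, and five:
\begin{itemize}
\item A rank-three summand is $M(K_4)$.
\item A rank-four summand is $M(K_5)$ or $M^*(K_{3,3})$. However, $M^*(K_{3,3})$ contains no triangle, because $K_{3,3}$ has no $3$-edge cut other than vertex stars, and those are not cocircuits of size three in the relevant sense. This point needs care.
\item A rank-five summand is $M(K_6)$, $M^*(G_{5,3})$, $M^*(G_{5,4})$, or $R_{10}$.
\end{itemize}

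\textbf{Step 3: case analysis.} Now consider each split.
\begin{itemize}
\item In the $\{3,5\}$ split, $P_T(M(K_4), N)$ is a graphic-type extension, so it is graphic or cographic whenever $N$ is. When $N = R_{10}$, note that $R_{10}$ has no triangles (its circuits have size at least four), so this case cannot occur.
\item In the $\{4,4\}$ split with both summands $M(K_5)$, the result is graphic, namely a subgraph of $K_7$.
\end{itemize}
The main obstacle is this triangle bookkeeping. We must verify which maximal summands actually contain triangles, and check that every surviving combination is graphic or cographic, except for the pairing of $M(K_5)$ with a rank-four cographic summand. In each graphic/cographic case we exhibit the graph explicitly, for example gluing graphs along a triangle, or the cographic counterpart via $\Delta$--$Y$ duality. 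We expect the corrected statement of the bookkeeping to pin down $M^*(K_{3,3})$ (equivalently, a triangle in its rank-four regular extension) as the only non-graphic rank-four partner. This yields the unique candidate $P_T(M(K_5), M^*(K_{3,3}))$.

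\textbf{Step 4: verification.} Finally we check that this candidate is simple, regular, $3$-connected, and neither graphic nor cographic. Regularity comes from the matrix \eqref{equ:A16}. It is non-graphic because it contains $M^*(K_{3,3})$, and non-cographic because it contains $M(K_5)$. This completes the identification.
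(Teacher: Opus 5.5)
The paper does not prove this lemma; it cites Danilov--Grishukhin and Israel's thesis. Your outline therefore has to stand on its own, and it has a genuine gap in the case analysis, although your overall skeleton is sound. That skeleton is: Seymour gives a $3$-sum, $r(M_1)+r(M_2)=8$, and maximality gives $M\cong P_T(N_1,N_2)$ with each $N_i$ maximal simple regular of rank $r(M_i)$.

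You should state the maximality step correctly, though. If $T$ could be restored while keeping simplicity, $M$ would \emph{not} be maximal. The actual point is that each element of $T$ is already parallel in $P_T(M_1,M_2)$ to an element of $M$, so $M\cong P_T(\operatorname{si}(M_1),\operatorname{si}(M_2))$.

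The case analysis then rests on two false claims. First, $M^*(K_{3,3})$ \emph{does} have triangles. Each vertex star of $K_{3,3}$ is a bond, since $K_{3,3}-v\cong K_{2,3}$ is connected, and bonds of $G$ are circuits of $M^*(G)$. The target matroid is glued along exactly such a triangle, so your Step~2 as written eliminates the answer. It also makes you skip the $\{4,4\}$ case with two copies of $M^*(K_{3,3})$. That case turns out to be harmless: the matroid is $M^*$ of the cubic graph obtained by joining two copies of $K_{3,3}-v$ by a $3$-edge matching, hence cographic.

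Second, and more seriously, it is false that $P_T(M(K_4),N)$ is graphic or cographic whenever $N$ is, and your appeal to homogeneity fails exactly here. The triangles of $M^*(G_{5,3})$ are the $3$-bonds of $G_{5,3}$. These are the vertex stars, but also the bond $T_0$ separating the unique triangle of $G_{5,3}$ from the rest. Gluing $M(K_4)=M^*(K_4)$ along a vertex star gives a cographic matroid (expand that vertex into a triangle). Gluing along $T_0$ does not.

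To see this, note that $G_{5,3}$ is $K_{3,3}$ with one vertex expanded into a triangle, so $M^*(G_{5,3})\cong P_{T_0}(M(K_4),M^*(K_{3,3}))$. Also $P_{T_0}(M(K_4),M(K_4))\cong M(K_5\setminus e)$, where $e$ is the edge of $K_5$ avoiding the vertices of the triangle. Hence
\[
P_{T_0}(M(K_4),M^*(G_{5,3}))\cong P_{T_0}(M(K_5\setminus e),\,M^*(K_{3,3})).
\]
In the labelling of \eqref{equ:A16}, this is the deletion of column $16$. It contains $M^*(K_{3,3})$ and $R_{12}$ (columns $1$--$12$), so it is neither graphic nor cographic, yet both of its summands are maximal simple regular.

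Your elimination criterion, that every other combination is graphic or cographic, therefore misses a genuine candidate. This case is excluded only because the matroid is a proper restriction of $P_T(M(K_5),M^*(K_{3,3}))$ and hence not maximal, and that has to be checked explicitly.

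With these corrections, the route can be completed by an orbit-by-orbit check of the triangles in $M(K_6)$, $M^*(G_{5,3})$, $M^*(G_{5,4})$, $M(K_5)$ and $M^*(K_{3,3})$, together with your correct observation that $R_{10}$ has no triangles. For $M^*(G_{5,4})$ only vertex stars occur, since $G_{5,4}$ is cubic of girth four.
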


As in the case of the $2$-sum, the automorphism groups of $M(K_5)$ and $M^*(K_{3,3})$ act transitively on their sets of circuits of length three, so the generalized parallel connection of these matroids along a triangle is well defined up to matroid isomorphism. We also note that the generalized parallel connection may be viewed as a $3$-sum if one first adds three edges in parallel to one of the two matroids along each edge in the $3$-circuit along which the matroids are joined. For a proof of Lemma \ref{lem:3sum}, see \cite[Page 424]{DanilovGrishukhin99} or \cite[Lemma 97 and Section 6.1]{Israel-PhD}. 

Lemmas \ref{lem:graphic} - \ref{lem:3sum} provide a complete list of candidates for the maximal simple regular matroids of rank six, according to Seymour's theorem. To finish the proof of Theorem~\ref{thm:MSR}, one needs to sort out the poset of inclusions among these candidates.

The next lemma covers the most difficult case of establishing inclusion among maximal simple regular matroids of rank six, and we include a proof here. After this is done, we proceed to a proof of Theorem \ref{thm:MSR}. The matroid $R_{12}$ has a representation over $\mathbb{Z}_2$ by removing the last four columns of the matrix $A_{16}$ in (\ref{equ:A16}).

\begin{lemma}[Maximal simple cographic but not maximal simple regular]\label{lem:MSCnotMSR}
The matroid $M^*(G_i)$ has $P_T(M(K_5), M^*(K_{3,3}))$ as a one-element extension if and only if $i = 7$.
\end{lemma}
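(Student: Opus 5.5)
The plan is to reduce the statement to a question about single-element deletions of $N := P_T(M(K_5), M^*(K_{3,3}))$. Both $N$ and each $M^*(G_i)$ have rank six. $N$ has $10 + 9 - 3 = 16$ elements and $M^*(G_i)$ has $15$. So $N$ is a one-element extension of $M^*(G_i)$ exactly when $N \backslash e \cong M^*(G_i)$ for some $e \in E(N)$. The lemma then reduces to two claims:
\begin{enumerate}[(i)]
\item up to isomorphism, exactly one deletion $N\backslash e$ is cographic;
\item that deletion is $M^*(G_7)$.
\end{enumerate}
The graphs $G_i$ are $3$-connected and pairwise non-isomorphic. By Whitney's theorem the matroids $M^*(G_i)$ are therefore pairwise non-isomorphic, so (i) and (ii) together give the ``if and only if''.

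For (i), I use that $M(K_5)$ and $M(K_{3,3})$ are excluded minors for cographic matroids. Label $K_5$ on vertices $1,\dots,5$ with $T = \{12,13,23\}$. The deletions split into three cases.
\begin{itemize}
\item If $e$ lies in the $M^*(K_{3,3})$ part but not in $T$, then $M(K_5)$ is still a restriction of $N\backslash e$, so $N\backslash e$ is not cographic.
\item If $e$ lies in $T$ or is an edge of $K_5$ meeting $T$ (such as $14$ or $24$), I will exhibit an $M(K_{3,3})$-minor of $N\backslash e$. The idea is to contract and delete elements in the $M^*(K_{3,3})$ side until that side collapses to a graphic piece, one that behaves like a set of edges attached at the triangle. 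What remains of $K_5 \backslash e$ together with these new edges then forms a $K_{3,3}$-subdivision, or a graph contracting to one.
\item The remaining orbit is $e = 45$, the unique edge of $K_5$ disjoint from $T$. The stabilizer of $T$ in $\operatorname{Aut}(K_5)$ acts transitively on the edges of each type, and $\operatorname{Aut}(M^*(K_{3,3}))$ acts transitively on triangles, so it is enough to treat one representative of each type.
\end{itemize}

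For (ii), consider $e=45$. Here $K_5\backslash 45$ is the triangle $123$ together with two vertices $4,5$, each joined to all of $1,2,3$. So $N\backslash 45$ is obtained from $M^*(K_{3,3})$ by two generalized parallel connections with $M(K_4)$ along the same triangle. To show $N\backslash 45$ is cographic, I will take the binary representation obtained from $A_{16}$ by deleting the column of $45$. I then compute the dual representation and check that it is the incidence structure of a graph $G$: every column has exactly two nonzero entries after suitable row operations. This gives $N\backslash 45 = M^*(G)$.

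Next, $N\backslash 45$ is simple, regular, has rank six and has $15$ elements. It is also maximal among simple cographic matroids, since a simple cographic extension would contradict Lemma~\ref{lem:cographic}'s edge count. It is non-graphic, because it contains $M^*(K_{3,3})$ as a restriction. Lemma~\ref{lem:cographic} then forces $G$ to be one of $G_1,\dots,G_9$. To pin down $G \cong G_7$, I will compare easily computed invariants of the nine graphs, such as the number of triangles, girth, and the number of $3$-edge cuts that are not vertex stars. If these invariants separate $G_7$ from the other eight, then the identification is complete.

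I expect the main obstacle to be the case analysis in (i), which must produce explicit $M(K_{3,3})$-minors when $e$ meets $T$. The delicate part is controlling how the $M^*(K_{3,3})$ side interacts with the triangle after minors are taken. If a direct minor search becomes messy, I would instead compute the dual $(N\backslash e)^* = N^*/e$ from the matrix and show that it is not graphic, for instance by finding a Fano-free but non-graphic obstruction such as $M^*(K_5)$ or $M^*(K_{3,3})$ as a minor.
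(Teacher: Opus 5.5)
Your target claims (i) and (ii) are true, but your case analysis puts cographicness in the wrong orbit, so both halves of the plan fail. In fact $N\backslash 45$ is \emph{not} cographic, and it is deleting an edge of type $14$ that gives $M^*(G_7)$.

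To see this in $A_{16}$: the shared triangle is $T=\{13,14,15\}$, and the $M(K_5)$-restriction sits on columns $3,4,7,8,11,12,13,14,15,16$. Column $16$ is your edge $45$, being the only element of $E(M(K_5))-T$ that lies in no triangle with an element of $T$. Column $7$ is an edge of type $14$. The paper's proof checks $N\backslash 7\cong M^*(G_7)$ by matching columns with fundamental cuts of a spanning tree of $G_7$. So the $M(K_{3,3})$-minor you plan to exhibit in $N\backslash 14$ does not exist.

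Conversely, $N\backslash 16$ is not cographic. Relative to the basis $\{1,\dots,6\}$, cographicness would require a forest on columns $7,\dots,15$ in which the row supports
$R_1=\{7,8,9,13,14\}$, $R_2=\{7,8,10,13,14\}$, $R_3=\{7,11\}$, $R_4=\{8,12\}$, $R_5=\{9,11,12,14,15\}$, $R_6=\{10,11,12,14,15\}$
are all paths.
\begin{itemize}
\item Since $R_1\cap R_2$ and $R_5\cap R_6$ are paths, $9$ is an end edge of both $R_1$ and $R_5$.
\item Since $R_1\cap R_5=\{9,14\}$ is a path, both run through $9$ and then $14$ to a vertex $y$. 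After $y$ they split, $R_1$ along $7,8,13$ and $R_5$ along $11,12,15$; in a forest these branches meet only at $y$.
\item Then $R_3$ and $R_4$ force both $7$ and $8$ to be incident with $y$. This is impossible, because $y$ has degree two in the path $R_1$.
\end{itemize}
So the matrix check you propose in (ii) will fail.

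The heuristic behind your choice is worth correcting. Gluing $M(K_4)$ along a triangle that is a vertex star of the underlying graph is cographic: it truncates that vertex. Since $T$ is a vertex star of $K_{3,3}$, the first gluing is harmless. Afterwards, however, $T$ is a nontrivial $3$-edge cut of the truncated graph. Gluing a second $K_4$ along the same $T$, which is exactly what $K_5\backslash 45$ amounts to, destroys cographicness.

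For $e=14$ the picture is different. $K_5\backslash 14$ is the union of the copies of $K_4$ on $\{1,2,3,5\}$ and $\{2,3,4,5\}$, which meet in $\{23,25,35\}$. Gluing the first copy along $T$ truncates a vertex of $K_{3,3}$. In the truncated graph, $\{23,25,35\}$ is the star of a vertex of the new triangle, so gluing the second copy truncates again. Hence $N\backslash 14$ is the cographic matroid of $K_{3,3}$ truncated twice, which has one triangle and four $4$-cycles, as $G_7$ does.

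With the orbits swapped, your strategy can be completed: exclude the $M^*(K_{3,3})$-side elements, $T$, and $45$, then identify the type-$14$ orbit with $G_7$. The paper avoids the orbit-by-orbit analysis. It excludes $f\in\{13,\dots,16\}$, then uses the five $4$-element cocircuits $C_1^*,\dots,C_5^*$ of $N$ to show that any other cographic deletion has a triad and at least five cocircuits of size at most four. Among $G_1,\dots,G_9$, only $G_7$ has the corresponding short cycles.
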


\begin{proof}
We denote the $16$-element generalized parallel connection by $R_{16}$, which is represented by the matrix $A_{16}$ over $\Z_2$.
\begin{equation}\label{equ:A16}
A_{16}=
\bordersquare{
&& & & & & &  &  &  &  &  &  &  &  &  & \cr
v_1&1&0 &0 &0 &0 &0 & 1 & 1 & 1 & 0 & 0 & 0 & 1 & 1 & 0 & 0 \cr
v_2&0& 1&0 &0 &0 &0 & 1 & 1 & 0 & 1 & 0 & 0 & 1 & 1 & 0 & 0\cr
v_3&0&0 & 1& 0&0 & 0& 1 & 0 & 0 & 0 & 1 & 0 & 0 & 0 & 0 & 1 \cr
v_4&0&0 &0 & 1& 0&0 & 0 & 1 & 0 & 0 & 0 & 1 & 0 & 0 & 0 & 1 \cr
v_5&0&0 &0 &0 & 1&0 & 0 & 0 & 1 & 0 & 1 & 1 & 0 & 1 & 1 & 0 \cr
v_6&0&0 &0 &0 & 0& 1& 0 & 0 & 0 & 1 & 1 & 1 & 0 & 1 & 1 & 0\cr
}
\end{equation}

\noindent Let $E_{16}=E(R_{16})=\{1, 2,\dots, 16\}$ where the columns of $A_{16}$ are indexed in order by $1,2, \dots, 16$.

Fix $f$ in $E_{16}$, and suppose $R_{16}\backslash f$ is isomorphic to $M^*(G_i)$ for some $1 \leq i \leq 9$. Note that $f \not\in \{13,14,15,16\}$ since otherwise $R_{16} \setminus f$ is not cographic (see \cite[Corollary~13.1.6]{Oxley-book}). We may therefore assume that $f \in \{1, 2, \ldots, 12\}$. We show that $i = 7$ by considering the cocircuits of each. 

Note that each of the sets $\{3,7,11,16\}, \{4,8,12,16\}, \{1,2,9,10\},$ $\{5,6,9,10\},$ and $\{1,2,5,6\}$ are cocircuits of $R_{16}$. We label these $C_1^*, C_2^*, \dots, C_5^*$, respectively. It can be verified that each of the sets $C_i^* - f$ is a cocircuit of $R_{16}\backslash f$ with cardinality at most four and that at least one has cardinality three. Moreover, for any $i\neq j$, the sets $C_i^*-f$ and $C_j^*-f$ are pairwise distinct since $C_i^*$ and $C_j^*$ have at most two elements in common. Therefore $R_{16} \setminus f$ has a cocircuit of size three and at least five in all of size at most four. 

On the other hand, the cocircuits of $M^*(G_i)$ are circuits of $M(G_i)$ and hence cycles of $G_i$. By inspection, only $G_7$ has the property of having at least one $3$-cycle and at least five cycles of size at most four. Thus $R_{16}\backslash f\cong M^*(G_i)$ only if $i=7$.

%
%
%

To finish the proof, we show that $M^*(G_7)$ is isomorphic to $R_{16} \backslash 7$. First, note that the latter matroid is represented by deleting the seventh column from $A_{16}$ in (\ref{equ:A16}).
    
Second, we show that this matrix is also a $\Z_2$-representation for $M^*(G_7)$. To see this, we fix a basis of $M^*(G_7)$. The bases of a cographic matroid label the complement of a spanning tree of the graph, one of which we label by $\{e_1,\ldots, e_6\}$ in the graph $G_7$ as shown in Figure \ref{fig:G7}. 

\begin{figure}[h]
    \centering
    \begin{tikzpicture}[scale=1.3,transform shape]
\draw[fill=black] (0,0.5) circle (2pt);
\draw[fill=black] (1,0.5) circle (2pt);
\draw[fill=black] (2,0.5) circle (2pt);
\draw[fill=black] (0,2) circle (2pt);
\draw[fill=black] (1,2) circle (2pt);
\draw[fill=black] (2,2) circle (2pt);
\draw[fill=black] (0.25,3) circle (2pt);
\draw[fill=black] (1.75,3) circle (2pt);
\draw[fill=black] (0, 1.25) circle (2pt);
\draw[fill=black] (2, 1.25) circle (2pt);

\draw[thick] (0,0.5) -- (1,0.5) node[midway, above, yshift=-2pt] {\tiny $f_{_{10}}$};
\draw[thick] (1,0.5) -- (2,0.5) node[midway, above, yshift=-2pt] {\tiny $e_{_3}$};
\draw[thick] (0,0.5) -- (0,2) node[midway, left, yshift=12pt, xshift=3pt] {\tiny $f_{_{13}}$};
\draw[thick] (0,0.5) -- (0,2) node[midway, left, yshift=-12pt, xshift=3pt] {\tiny $f_{_{11}}$};
\draw[thick] (1,0.5) -- (1,2) node[midway, left, yshift=12pt, xshift=4pt] {\tiny $f_{_{14}}$};
\draw[thick] (2,0.5) -- (2,2) node[midway, right, yshift=12pt, xshift=-3pt] {\tiny $f_{_{12}}$};
\draw[thick] (2,0.5) -- (2,2) node[midway, right, yshift=-12pt, xshift=-3pt] {\tiny $e_{_4}$};
\draw[thick] (2,2) -- (0.25,3) node[above, right,xshift=1pt] {\tiny $e_{_1}$};
\draw[thick] (2,2) -- (1.75,3) node[midway, above right, yshift=-5pt, xshift=-3pt] {\tiny$e_{_2}$};
\draw[thick] (1,2) -- (0.25,3) node[midway, below left, yshift=-4pt, xshift=12pt] {\tiny $e_{_5}$};
\draw[thick] (1,2) -- (1.75,3) node[midway, below right, yshift=-4pt, xshift=-10pt] {\tiny $e_{_6}$};
\draw[thick] (0,1.25) -- (2,1.25) node[midway, above, xshift =16pt, yshift=-3pt] {\tiny $f_{_7}$};
\draw[thick] (0,2) -- (0.25,3) node[midway, left, xshift=3pt] {\tiny $f_{_8}$};
\draw[thick] (0,2) -- (1.75,3) node[above, left,xshift=-1pt]{\tiny $f_{_9}$};
\draw [thick] (0,0.5) to [out=-80,in=-100] (2,0.5) node[midway, above, yshift=-5pt, xshift=27pt] {\tiny $f_{_{15}}$};
\end{tikzpicture}
    \caption{Plane drawing of $G_7$ with edge labels}
    \label{fig:G7}
\end{figure}

We obtain a $\mathbb{Z}_2$-representation of $M^*(G_7)$ by using the standard basis vectors for the basis $\{e_1,e_2,\dots,e_6\}$. The column corresponding to each $f_j$ is given by putting a $1$ in precisely the entries corresponding to the edges $e_i$ appearing in the unique minimal edge cut contained in $\{e_1,e_2,\dots,e_6,f_j\}$, that is, the fundamental circuit of $M^*(G_7)$ corresponding to this basis and $f_j$. This matrix representation is exactly the matrix given in (\ref{equ:A16}) with the seventh column removed. Hence, $M^*(G_7)\cong R_{16} \backslash 7$.
\end{proof}

We now give a proof of the main theorem in this section.
\begin{proof}[Proof of Theorem \ref{thm:MSR}]
Let $M$ be a maximal simple regular matroid of rank six. If it is graphic, then it is maximal simple graphic and hence is $M(K_7)$ by Lemma \ref{lem:graphic}. Similarly, if $M$ falls into one of the other cases of Seymour's theorem, then $M$ is isomorphic to $M^*(G_i)$ for some $i \in \{1, \ldots, 9\}$, or $M$ is one of $P(M(K_3), R_{10})$ or $P_T(M(K_5), M^*(K_{3,3}))$ by Lemmas \ref{lem:cographic} - \ref{lem:3sum}. It suffices to show that all twelve of these matroids are maximal among all simple regular matroids except for $M^*(G_7)$.

The graphic matroid $M(K_7)$ has more elements in its ground set than the other eleven, so it is clearly maximal. 

Each of the cographic matroids members has $15$ elements, which is more than the $P(M(K_3),R_{10})$, and none of the cographic members are graphic, that is, they are not contained in $M(K_7)$. Moreover, for each $i\neq j$, the matroids $M^*(G_i)$ and $M^*(G_j)$ are pairwise non-isomorphic.
The only possibility for inclusion then is that, for some $i\in \{1,2,\ldots,9\}$, the matroid $M^*(G_i)$ is isomorphic to a single-element deletion of $R_{16}$. By Lemma~\ref{lem:MSCnotMSR}, this is possible if and only if $i = 7$. 

Finally, the maximality of the $2$-sum and the $3$-sum is done directly by analyzing their representations as vector matroids and showing that no vector can be appended to the matrix without losing the simple or regular property. For $P(M(K_3),R_{10})$, this computation is simplified using the maximality and double transitivity of $R_{10}$ and is done in \cite[page 328]{Seymour80} (see also \cite[Lemma 83]{Israel-PhD}). For $R_{16}$, this computation follows from that in \cite[Appendix]{Seymour80} (see also \cite[Exercise 3.1.3]{Oxley-book}).
\end{proof}

\vspace{1em}
\section{Higher cogirths and Theorem \ref{thm:monotonicity}}\label{sec:HigherCogirths}
\vspace{1em}

In this section, we define the notion of $3$-cosystole that we use in this paper and prove the following, which reduces the proof of Theorem \ref{thm:3-cogirth} to the case of maximal simple regular matroids:

\begin{corollary}\label{cor:reduction}
    If $M$ is a regular matroid and $\hat M$ is a maximal simple regular matroid that contains the simplification of $M$, then $\sys_3^*(M) \leq \sys_3^*(\hat M)$.
\end{corollary}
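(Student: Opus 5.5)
The plan is to deduce the corollary directly from Theorem \ref{thm:monotonicity}, which at this point in the paper has been stated and will be proved in this section. First I would use the simplification part of Theorem \ref{thm:monotonicity} to replace $M$ by $\operatorname{si}(M)$:
\[\sys_3^*(M) = \sys_3^*(\operatorname{si}(M)).\]
So it suffices to show that $\sys_3^*(N) \leq \sys_3^*(\hat M)$ whenever $N = \operatorname{si}(M)$ is a simple regular matroid contained in $\hat M$.

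Next I would make precise what ``contains'' means. Since $\hat M$ is maximal simple regular and contains $N$, the matroid $N$ is a restriction of $\hat M$: there is a subset $X \subseteq E(\hat M)$ with $N \cong \hat M \backslash X$. Here I will assume the paper's convention that $r(N) = r(\hat M)$, as in the list of Theorem \ref{thm:MSR}, where inclusions are equal-rank extensions. I would then order $X = \{x_1, \ldots, x_k\}$ and set $N_j = \hat M \backslash \{x_1, \ldots, x_j\}$. Each $N_j$ is regular, being a minor of a regular matroid, and $N_k = N$.

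For each $j$, the element $x_{j+1}$ is not a coloop of $N_j$, because deleting it does not drop the rank. Also $r(N_j / x_{j+1}) = r(N) - 1 \geq 3$, provided the rank is at least four. The first inequality of Theorem \ref{thm:monotonicity} then gives
\[\sys_3^*(N_{j+1}) \leq \sys_3^*(N_j).\]
Chaining these inequalities from $j = 0$ to $j = k-1$ yields $\sys_3^*(N) \leq \sys_3^*(\hat M)$, which is the claim.

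The main obstacle is the bookkeeping on hypotheses, and I would handle it in two parts.
\begin{enumerate}
\item \emph{Equal rank.} I must justify that the containment may be taken to be of equal rank, so that no deleted element is ever a coloop. If $r(N) < r(\hat M)$ is allowed, I would instead embed $N$ into a maximal simple regular matroid of its own rank, as in the rank-by-rank lists of Section \ref{sec:list}. Alternatively, I would observe that a coloop deletion corresponds to a contraction, and that the second inequality of Theorem \ref{thm:monotonicity} goes the wrong way for it; this is why the equal-rank convention is needed.
\item \emph{Small rank.} I must deal with the condition $r(M/e) \geq 3$, which requires rank at least four. In rank three, $\hat M = M(K_4)$, and I would either verify that case directly or note that $\sys_3^*$ is only meaningful when a non-inclusion triple exists. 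The quantity should also be interpreted sensibly whenever no admissible triple exists, for instance by convention as $+\infty$, which makes the inequality trivial.
\end{enumerate}
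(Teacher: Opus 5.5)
Your proposal is correct and follows the paper's route: combine $\sys_3^*(M)=\sys_3^*(\operatorname{si}(M))$ with repeated use of the equal-rank deletion inequality along a chain of single-element deletions from $\hat M$ down to $\operatorname{si}(M)$. As you note, ``contains'' must mean an equal-rank restriction, since otherwise the claim fails, e.g.\ $\sys_3^*(M(K_4))=\tfrac32>\tfrac67=\sys_3^*(M(K_7))$.

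Your small-rank caveat is unnecessary if you cite Lemma~\ref{lem:monoton del} directly, because it carries no hypothesis $r(M/e)\geq 3$; in Theorem~\ref{thm:monotonicity} that hypothesis is only needed for the contraction inequality.
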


The definitions we give in this section have dual notions in terms of the weighted girth. However, our main results are estimates that hold in the context of fixed ranks (not coranks), so we focus here on cosystoles to avoid confusion.

For a weighted matroid $(M, \mu)$, define

\begin{enumerate}
    \item the weight of a subset $X$ of the ground set $E$ of $M$ by $\mu(X) = \sum_{e \in X} \mu(e)$,  and
    \item the cosystole of $(M, \mu)$ by $\sys^*(M,\mu) = \min \mu(C^*)/\mu(E)$ of $(M, \mu)$ where the minimum runs over cocircuits $C^*$ of $M$.
\end{enumerate}
For a matroid, we define the cosystole 
    \begin{equation}\label{equ:cosys defn}
        \sys^*(M) = \max \sys^*(M, \mu)
    \end{equation}
where the maximum runs over all non-negative, non-zero weight functions $\mu$. Note that the maximum in (\ref{equ:cosys defn}) is well defined since $\sys^*(M, \mu)$ depends continuously on $\mu$ and since scale invariance allows us to restrict the maximum to run over the compact subset probability distributions $\mu$.

For $k \geq 1$, we wish to generalize the cosystole to higher cosystoles that measure weights of the $k$ smallest cocircuits. One natural candidate for this is based on the quantity
    \[\sys_k^*(M, \mu) = \min \sum_{i=1}^{k} \mu(C_i^*)\]
where the minimum runs over all $k$-tuples of pairwise distinct cocircuits $C_1^*,\ldots,C_k^*$. For our purposes, we need a related invariant given by
the same formula except that the minimum runs over $k$-tuples of pairwise distinct cocircuits $C_1^*, \ldots, C_k^*$ having the additional {\it non-inclusion property} that $C_i^*$ is not contained in the union of the other $k-1$ cocircuits for all $1 \leq i \leq k$. 

For $k\in \{1,2\}$, it is trivially true that these notions agree for any probability measure $\mu$. Indeed, the case where $k = 2$ follows since cocircuits, like circuits, cannot properly contain other cocircuits, so the non-inclusion condition is equivalent to the pairwise distinct condition. In this paper, we assume $k = 3$, and we work with triples of cocircuits satisfying the non-inclusion property. 
As above, we then define the $3$-cosystole
    \[\sys_3^*(M) = \max \sys_3^*(M, \mu)\]
by taking a maximum over probability distributions $\mu$ on the ground set of $M$. 


The main results of the rest of this section show that proving upper bounds on $\sys_3^*(M)$ for regular matroids reduces to the simple regular case and then furthermore to the maximal simple regular case.

The first step is to show that $\sys_3^*(M)$ is non-decreasing under equal rank extensions. This proves one of the inequalities in Theorem \ref{thm:monotonicity}.

\begin{lemma}\label{lem:monoton del}
If $M$ is an equal rank, single-element extension of a matroid $N$, then 
    \[\sys_3^*(N)\leq \sys_3^*(M).\]
\end{lemma}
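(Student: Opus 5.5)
Throughout, $M$ has ground set $E(M) = E(N) \cup \{e\}$ with $N = M\backslash e$ and $r(M) = r(N)$, so $e$ is not a coloop of $M$. The idea is to push an optimal weight function on $N$ forward to $M$ by giving $e$ weight zero, and then pull a good triple of cocircuits of $M$ back to $N$.

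Let $\mu$ be a probability distribution on $E(N)$ with $\sys_3^*(N,\mu) = \sys_3^*(N)$; it exists by compactness, as for \eqref{equ:cosys defn}. Define $\hat\mu$ on $E(M)$ by $\hat\mu|_{E(N)} = \mu$ and $\hat\mu(e) = 0$. This is again a probability distribution. Then $\sys_3^*(M) \geq \sys_3^*(M,\hat\mu)$, so it suffices to show $\sys_3^*(M,\hat\mu) \geq \sys_3^*(N,\mu)$. Concretely, for every triple $C_1^*, C_2^*, C_3^*$ of cocircuits of $M$ with the non-inclusion property, we must produce a triple of cocircuits of $N$ with the non-inclusion property whose $\mu$-weights sum to at most $\sum \hat\mu(C_i^*)$. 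Since $\hat\mu(e)=0$, we have $\hat\mu(C_i^*) = \mu(C_i^* - e)$.

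The key tool is the description of cocircuits of a deletion: the cocircuits of $M\backslash e$ are the minimal nonempty sets of the form $D - e$ with $D$ a cocircuit of $M$. Because $e$ is not a coloop, each $C_i^* - e$ is nonempty. It is also codependent in $N$, since its complement in $E(N)$ is contained in a hyperplane of $M$, hence in a flat of $N$ of rank less than $r(N)$. Therefore each $C_i^* - e$ contains a cocircuit $D_i$ of $N$, with $\mu(D_i) \leq \mu(C_i^* - e)$.

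The main obstacle is checking that $D_1, D_2, D_3$ have the non-inclusion property, and in particular that they are distinct. I would first show that $C_i^* - e$ is itself a cocircuit of $N$. Suppose $D \subsetneq C_i^* - e$ is a cocircuit of $N$. Then $D = D' - e$ for a cocircuit $D'$ of $M$. Since $D'$ is not properly contained in $C_i^*$, it must contain $e$ while $C_i^*$ does not. By cocircuit elimination on $C_i^*$ and $D'$, taking an element of $C_i^* - D'$, one gets a cocircuit of $M$ inside $(C_i^* \cup D') - \{x\}$. If $e \notin C_i^*$, this contradicts minimality of $C_i^*$ as a cocircuit avoiding $e$. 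Hence the case $e \notin C_i^*$ forces $C_i^* - e = C_i^*$ to be a cocircuit of $N$ directly. The case $e \in C_i^*$ needs the analogous argument, and I expect it can be handled by a careful elimination; for binary or regular $M$ one can alternatively argue with symmetric differences of cocycles.

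Granting $D_i = C_i^* - e$, the non-inclusion property transfers. If $C_h^* - e \subseteq (C_i^* - e) \cup (C_j^* - e)$, then $C_h^* \subseteq C_i^* \cup C_j^* \cup \{e\}$. This is a contradiction unless $e \in C_h^*$ and $e$ lies in neither $C_i^*$ nor $C_j^*$. That residual case is the delicate point. There I would replace $C_h^*$ using elimination, or argue that $C_h^* - e$ would then be a union of cocycles of $N$ inconsistent with $C_h^*$ being a cocircuit of $M$.

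Once the transfer holds, $\sys_3^*(N,\mu) \leq \sum_i \mu(D_i) \leq \sum_i \hat\mu(C_i^*)$. Minimizing over triples gives $\sys_3^*(N) \leq \sys_3^*(M,\hat\mu) \leq \sys_3^*(M)$.
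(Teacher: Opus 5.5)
Your reduction matches the paper's: extend an optimal $\mu$ on $N$ by $\hat\mu(e)=0$, and for every non-inclusion triple of cocircuits of $M$ produce a non-inclusion triple of cocircuits of $N$ of no larger weight. The second step, however, is not carried out.

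The intermediate claim that $C_i^*-e$ is a cocircuit of $N$ is false. It fails exactly in the case $e\notin C_i^*$ that you say is settled ``directly''. For $M=M(K_4)$ and $e=ab$, the bond $\{ac,ad,bc,bd\}$ avoids $e$, but in $K_4-ab$ it is the disjoint union of the vertex bonds $\{ac,ad\}$ and $\{bc,bd\}$. Your elimination step gives no contradiction. Whatever common element $x$ you eliminate, the resulting cocircuit lies in $(C_i^*\cup D')-x$ and may contain $e\in D'$; here, eliminating $ac$ from $C_i^*$ and $D'=\{ab,ac,ad\}$ yields $\{ab,bc,bd\}$. So it is not a smaller cocircuit inside $C_i^*$.

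The case you left open is actually the easy one. If $e\in C_i^*$, then $E(M)-C_i^*$ is a hyperplane of $M$ avoiding $e$, hence a hyperplane of $N$ because $r(N)=r(M)$, so $C_i^*-e$ is a cocircuit of $N$. In any event you do not need $D_h=C_h^*-e$. As in the paper's Case 1, if some $f_h\in C_h^*\setminus(C_i^*\cup C_j^*)$ differs from $e$, take a cocircuit $D_h\subseteq C_h^*-e$ of $N$ through $f_h$. Then $f_h\notin D_i\cup D_j$, and non-inclusion is automatic.

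The genuine gap is the residual case you flag, $C_3^*\setminus(C_1^*\cup C_2^*)=\{e\}$, which is where the lemma has content. Neither of your suggestions handles it, and deriving a contradiction cannot work, because the case occurs. In $M(K_4)$ with $e=ab$, let $C_1^*,C_2^*,C_3^*$ be the vertex bonds at $c$, $d$, $a$. Then $C_1^*$ and $C_2^*$ remain cocircuits of $N$, so $D_1=C_1^*$ and $D_2=C_2^*$ are forced. Meanwhile $C_3^*-e=\{ac,ad\}\subseteq C_1^*\cup C_2^*$. Thus no triple with $D_i\subseteq C_i^*-e$ has the non-inclusion property. You must use cocircuits of $N$ that leave the sets $C_i^*$, and then $\sum\mu(D_i)\le\sum\hat\mu(C_i^*)$ is no longer free.

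The paper's Cases 2 and 3 handle this as follows:
\begin{itemize}
\item Replace $C_1^*$ (in Case 2 also $C_2^*$) by a cocircuit through $f_i$, obtained by strong elimination of a common element of $C_i^*$ and $C_3^*$. This is an element $g$ of $C_1^*\cap C_2^*\cap C_3^*$ in Case 2, and an element $h_1$ of the heavier of $C_1^*\cap C_3^*$ and $C_2^*\cap C_3^*$ in Case 3.
\item Take $D_3\subseteq C_3^*-e$ through the eliminated element.
\item Pay for the extra weight using $C_3^*-e\subseteq C_1^*\cup C_2^*$.
\end{itemize}
For binary $M$ the count closes cleanly if the replacements are chosen inside $(C_i^*\triangle C_3^*)-e$, since $\mu(C_3^*)\le\mu(C_1^*\cap C_3^*)+\mu(C_2^*\cap C_3^*)$. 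Without an argument of this kind, your proposal only covers triples in which every $C_h^*$ has a witness other than $e$.
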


\begin{proof}
    Let $p$ be the element in $E(M)$ such that $M\backslash p= N$. Fix a probability distribution $\nu$ on $N$. Extend this to a probability distribution $\mu$ on $M$ by setting $\mu(p) = 0$, and $\mu = \nu$ otherwise. We claim that $\sys^*_3(N, \nu) \leq \sys_3^*(M, \mu)$. Given this claim, we maximize over probability distributions on $M$ to conclude $\sys_3^*(N, \nu) \leq \sys_3^*(M)$ and then over probability distributions on $N$ to conclude $\sys_3^*(N) \leq \sys_3^*(M)$, as required.

    Fix cocircuits $C_1^*$, $C_2^*$, and $C_3^*$ on $M$ satisfying $C_h^* \not\subseteq C_i^* \cup C_j^*$ for all $\{h, i, j\} = \{1, 2, 3\}$ that realize the minimum in the definition:
        \[\mu(C_1^*) + \mu(C_2^*) + \mu(C_3^*) = \sys_3^*(M, \mu).\]
    It suffices to find cocircuits $D_i^*$ of $N$ with the non-inclusion property and the property that the sum of their weights is at most that of the $C_i^*$. We prove this in each of the following three cases, which up to relabeling of the $C_i^*$ are exhaustive and will complete the proof of the theorem. To start, we fix an element
        \[f_h \in C_h^* \setminus (C_i^* \cup C_j^*)\]
    for all $h \in \{1,2,3\}$ and $\{h, i, j\} = \{1, 2, 3\}$.

    {\bf Case 1}: For all $h \in \{1, 2, 3\}$, there exists a choice of $f_h \neq p$.

    Consider the set $C_h^* - \{p\}$ as a subset of the ground set of $N$. This set is non-empty since it contains $f_h$, and it is a union of cocircuits of $N$ by \cite[Exercise 3.1.2]{Oxley-book}. Therefore, there exists a cocircuit $D_h^* \subseteq C_h^* - \{p\}$ that contains $f_h$ for all $h \in \{1, 2, 3\}$. The triple $\{D_1^*,D_2^*,D_3^*\}$ satisfies the non-inclusion condition and clearly satisfies $\nu(D_h^*) \leq \mu(C_h^*)$ for all $h$, so we have $\sys_3^*(N, \nu) \leq \sys_3^*(M)$ in this case.

    \vspace{1em}
    {\bf Case 2}: $C_3^* \setminus (C_1^* \cup C_2^*) = \{p\}$ and $C_1^* \cap C_2^* \cap C_3^* \neq \emptyset$.

    Fix $g \in C_1^* \cap C_2^* \cap C_3^*$. This time, we choose a cocircuit $D_3^* \subseteq C_3^* - p$ that contains $g$, and we choose cocircuits $C_{i,3}^* \subseteq C_i^* \cup C_3^* - \{g\}$ that contain $f_i$ using the strong cocircuit axiom (see \cite[Exercise 1.1.14]{Oxley-book}). Then we may choose cocircuits $D_i^* \subseteq C_{i,3}^*-p$ that contain $f_i$ for $i \in \{1, 2\}$. Again, it is straightforward to check that the triple $\{D_1^*, D_2^*, D_3^*\}$ has the non-inclusion property, that $\nu(D_1^*) + \nu(D_2^*) \leq \mu(C_1^*) + \mu(C_2^*)$, and that $\nu(D_3^*) \leq \mu(C_3^*)$. Thus the claim holds in this case.

    \vspace{1em}
    {\bf Case 3}: $C_3^* \setminus (C_1^* \cup C_2^*) = \{p\}$ and $C_1^* \cap C_2^* \cap C_3^* = \emptyset$.

    Note that $C_1^* \cap C_3^*$ and $C_2^* \cap C_3^*$ cannot both be empty, since otherwise $C_3^* = \{p\}$ is a coloop, which implies $N$ and $M$ have different ranks. We may swap the roles of $C_1^*$ and $C_2^*$, if necessary, so that $C_1^* \cap C_3^*$ is non-empty and $\mu(C_1^* \cap C_3^*) \geq \mu(C_2^* \cap C_3^*)$. 
    Fix $h_1 \in C_1^* \cap C_3^*$. There exists a cocircuit $C_{13}^* \subseteq C_1^* \cup C_3^* - \{h_1\}$ of $M$ that contains $f_1$ and then a cocircuit $D_1^* \subseteq C_{13}^* - \{p\}$ of $N$ that contains $f_1$. We also fix a cocircuit $D_2^* \subseteq C_2^* - \{p\}$ of $N$ that contains $f_2$ and a cocircuit $D_3^* \subseteq C_3^* - \{p\}$ of $N$ that contains $h_1$. The triple $\{D_1^*,D_2^*,D_3^*\}$ satisfies the non-inclusion property. Moreover, 
        \begin{flalign*}
            \nu(D_1^*)+\nu(D_2^*)+\nu(D_3^*)
            &\leq \mu(C_1^*\backslash C_3^*)+\mu(C_2^*\backslash C_3^*)
            +\mu(C_1^*\cap C_3^*)+3\mu(C_2^*\cap C_3^*)\\
            &\leq \mu(C_1^*\backslash C_3^*)+\mu(C_2^*\backslash C_3^*)
            +2\mu(C_1^*\cap C_3^*)+2\mu(C_2^*\cap C_3^*)\\
            &= \mu(C_1^*)+\mu(C_2^*)+\mu(C_3^*).
        \end{flalign*}
    Hence we again have the proof of the claim in this case.
\end{proof}

Now we prove the other inequality in Theorem \ref{thm:monotonicity}.

\begin{lemma}
If $N$ is a single-element coextension of a matroid $M$ and $r(N)\geq 3$, then 
    \[\sys_3^*(M) \leq \sys_3^*(N).\]
\end{lemma}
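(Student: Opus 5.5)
The plan is to mimic the proof of Lemma~\ref{lem:monoton del}, using that the cocircuits of a contraction $M = N/e$ are exactly the cocircuits of $N$ that avoid $e$. Write $N/e = M$, where $e$ is the new element and $e$ is not a loop of $N$. Fix a probability distribution $\mu$ on $E(M)$ that realizes $\sys_3^*(M)$, and extend it to a distribution $\nu$ on $E(N) = E(M) \cup \{e\}$ by setting $\nu(e) = 0$. It then suffices to show $\sys_3^*(M,\mu) \le \sys_3^*(N,\nu)$. In other words, for every triple of cocircuits $D_1^*, D_2^*, D_3^*$ of $N$ with the non-inclusion property, we must produce a triple of cocircuits of $M$ with the non-inclusion property and total $\mu$-weight at most $\sum_i \nu(D_i^*)$. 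Maximizing over $\nu$ then gives the inequality. The hypothesis $r(N) \ge 3$ is what guarantees that triples satisfying the non-inclusion property exist in $N$ at all, so that $\sys_3^*(N)$ is defined.

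If no $D_i^*$ contains $e$, the $D_i^*$ are already cocircuits of $M$ and we are done. In general, we handle each $D_h^*$ that contains $e$ by eliminating $e$. Fix witnesses $f_h \in D_h^* \setminus (D_i^* \cup D_j^*)$. Suppose $e \in D_h^*$ and $f_h \ne e$. We then want a cocircuit of $N$ that contains $f_h$, avoids $e$, and has controlled weight. The natural tool is strong cocircuit elimination against another cocircuit containing $e$, say $D_i^*$ with $e \in D_h^* \cap D_i^*$. This yields a cocircuit $D_{hi}^* \subseteq (D_h^* \cup D_i^*) - \{e\}$ containing $f_h$. The cost is that its weight may be as large as $\nu(D_h^*) + \nu(D_i^*)$. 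So I would organize the argument by how many of the $D_i^*$ contain $e$ (zero, one, two, or three), and in each case choose carefully which eliminations to perform. The weight of the overlaps should be paid for in the same way as in Case~3 of the previous lemma, where it was absorbed by choosing the pair with the larger intersection weight.

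The main obstacle is the case in which exactly one cocircuit, say $D_3^*$, contains $e$. If $D_3^* - \{e\}$ is not a union of cocircuits of $M$, there is no other cocircuit through $e$ to eliminate against. Here I would first try the following. Since $e$ is not a coloop issue in $N/e$ (the cocircuits of $N$ avoiding $e$ span everything else), pick any cocircuit $D$ of $N$ that contains $e$ and has minimal weight among those meeting $f_3$ suitably. Alternatively, replace $\nu$ by a modification: instead of $\nu(e) = 0$, put weight on $e$ but not on the rest, which seems wrong. So I would retain $\nu(e) = 0$ and look for a cleverer choice: use the fact that in a binary (regular) matroid the symmetric difference $D_3^* \triangle D_1^*$ is a disjoint union of cocircuits. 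If $e \notin D_1^*$, then $D_3^* \triangle D_1^*$ still contains $e$, which does not help. So I would instead use that $N$ is regular and that $e$ lies in some cocircuit $D \subseteq D_3^* \cup D_1^* \cup D_2^*$ only if forced.

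If this case cannot be handled directly, the fallback is to dualize: $M^* = N^* \backslash e$. Then Lemma~\ref{lem:monoton del}, applied to $N^*$, would handle circuits rather than cocircuits. This does not match, so the honest expectation is that the single-$e$ case requires a specific argument. That argument would choose the triple in $N$ to be a minimizer, so that replacing $D_3^*$ by a cocircuit avoiding $e$ and containing $f_3$ inside $D_3^* \triangle D$, for a suitable cocircuit $D$ through $e$ of small weight, does not increase the total weight.
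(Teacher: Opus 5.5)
\textbf{There is a genuine gap: you have the contraction the wrong way round, and the inequality you set out to prove is false.}

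Reading ``$N$ is a coextension of $M$'' literally, you set $M=N/e$ and try to prove $\sys_3^*(N/e)\le\sys_3^*(N)$, i.e.\ that contraction \emph{lowers} the $3$-cosystole. The paper's proof instead takes $N=M/p$; the lemma's wording is loose on this point. The intended statement is the inequality $\sys_3^*(M)\le\sys_3^*(M/p)$ of Theorem~\ref{thm:monotonicity}. This is also the orientation in which $r(N)\ge 3$ guarantees that both invariants are defined, whereas under your reading $r(M)=r(N)-1$ may be $2$.

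In your orientation the statement fails. Take $N=M(K_5)$ with vertices $a,b,c,d,x$, and $e=ab$.
\begin{itemize}
\item Averaging over the five vertex stars gives $\sys_3^*(N)\le 6/5$.
\item $N/e$ is $K_4$ on $u,c,d,x$ with the three edges at the merged vertex $u$ doubled. Its four vertex stars cover every element exactly twice, and any three of them have the non-inclusion property.
\item Put weight $1/12$ on the six elements at $u$ and $1/6$ on $cd,cx,dx$. Then every cocircuit weighs at least $1/2$, so $\sys_3^*(N/e)=3/2>6/5$.
\item The averaging also forces every maximizing $\mu$ to give each of the four stars weight exactly $1/2$.
\end{itemize}
Now extend such a $\mu$ by $\nu(ab)=0$. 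The vertex stars of $K_5$ at $a$, $b$, $c$ satisfy the non-inclusion property. The first two together weigh $\mu(\text{star of }u)=1/2$ and the third weighs $1/2$. Hence $\sys_3^*(N,\nu)\le 1<3/2=\sys_3^*(M,\mu)$. So your key step, trading every non-inclusion triple of $N$ for a no-heavier one in $N/e$, cannot be carried out. No choice of eliminations in your one-, two- or three-cocircuit cases will rescue it; the case you flagged as the obstacle is not merely hard.

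\textbf{How the paper's proof goes.} It uses the fact you quote, that the cocircuits of $M/p$ are the cocircuits of $M$ avoiding $p$, in the easy direction. Every non-inclusion triple of cocircuits of $N=M/p$ is already a non-inclusion triple in $M$. What must be transported is the weight function, from $M$ down to $M/p$. The paper spreads $\mu(p)$ evenly, setting $\mu_-(x)=\mu(x)+\mu(p)/(|E|-1)$, so that $\mu\le\mu_-$ on $E-p$. For a triple $C_1^*,C_2^*,C_3^*$ realizing $\sys_3^*(N,\mu_-)$ one gets
\[
\sys_3^*(M,\mu)\le\sum\mu(C_i^*)\le\sum\mu_-(C_i^*)=\sys_3^*(N,\mu_-)\le\sys_3^*(N).
\]
Maximizing over $\mu$ finishes the proof, with no elimination, case analysis or regularity needed. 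Extension by zero is the right device for deletion, where triples must be pushed from the larger matroid to the smaller one. For contraction, the roles of the weight transfer and the triple transfer are reversed.
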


\begin{proof}
    Let $N= M/p$ for some $p\in E(M)$. Fix a probability distribution $\mu$ on $M$.
    Define a probability distribution $\mu_-$ on $N$ by $\mu_-(e)=\mu(e)+\tfrac{\mu(p)}{|E|-1}$ for all $e \in E - p$. Choose cocircuits $C_i^*$ of $N$ realizing the minimum 
    \[\sum \mu_-(C_i^*) = \sys_3^*(M/e, \mu_-).\]
    By \cite[3.1.17]{Oxley-book}, the sets $C_i^*$, regarded as subsets of $E$, are also cocircuits in $M$. Moreover, for each element $e$ in $E(M)-\{p\}$, we have $\mu(e)\leq \mu_-(e)$. Therefore,
    \[
        \sys_3^*(M,\mu) \leq \sum \mu(C_i^*) \leq \sum \mu_-(C_i^*).
    \]
    Combining the two displayed equalities, we have $\sys_3^*(M, \mu) \leq \sys_3^*(N,\mu_-)$. As in the previous proof, it follows that $\sys_3^*(M) \leq \sys_3^*(N)$.
\end{proof}

We now finish the proof of Theorem \ref{thm:monotonicity} by showing the third inequality claimed in the theorem.

\begin{lemma}
If $M$ is a regular matroid and $\operatorname{si}(M)$ is its simplification, then
    \[\sys_3^*(M) = \sys_3^*(\operatorname{si}(M)).\]
\end{lemma}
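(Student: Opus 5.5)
We need to prove $\sys_3^*(M) = \sys_3^*(\operatorname{si}(M))$. The plan is to establish each inequality separately by reducing to two elementary operations: deleting a loop, and deleting one element of a parallel class of size at least two. Since $\operatorname{si}(M)$ is obtained from $M$ by a finite sequence of such deletions, it suffices to prove equality for a single such deletion $N = M\backslash p$. In both cases the deletion has equal rank, since a loop or a parallel element is never a coloop. Lemma~\ref{lem:monoton del} therefore already gives $\sys_3^*(N) \leq \sys_3^*(M)$, and only the reverse inequality remains.

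\textbf{Key observation.} The cocircuits of $N$ and $M$ correspond bijectively in an explicit way.
\begin{itemize}
\item If $p$ is a loop, then $p$ lies in no cocircuit, so the cocircuits of $M$ and $N$ coincide as sets.
\item If $p$ is parallel to some $q \neq p$, then every cocircuit of $M$ meets the circuit $\{p,q\}$ in zero or at least two elements. Hence a cocircuit of $M$ contains $p$ if and only if it contains $q$. The map $C^* \mapsto C^* - \{p\}$ is then a bijection from cocircuits of $M$ to cocircuits of $N$, with inverse $D^* \mapsto D^* \cup \{p\}$ when $q \in D^*$. This can be checked with \cite[Exercise 3.1.2]{Oxley-book}, or directly via complements of hyperplanes, since $p$ and $q$ lie in the same flats.
\item This bijection preserves the non-inclusion property. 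The reason is that $p \in C_h^*$ forces $q \in C_h^*$, so $C_h^* \subseteq C_i^* \cup C_j^*$ holds if and only if $C_h^* - p \subseteq (C_i^* - p) \cup (C_j^* - p)$.
\end{itemize}

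\textbf{Reverse inequality.} Given a probability distribution $\mu$ on $M$, define $\nu$ on $N$ by moving the mass of $p$ elsewhere:
\begin{itemize}
\item in the parallel case, set $\nu(q) = \mu(q) + \mu(p)$ and $\nu = \mu$ otherwise;
\item in the loop case, simply rescale $\mu$ restricted to $E - p$.
\end{itemize}

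In the parallel case, for every cocircuit $C^*$ of $M$ we have $\nu(C^* - p) = \mu(C^*)$, because $p$ and $q$ are either both in $C^*$ or both absent. Combined with the bijection on admissible triples, this gives $\sys_3^*(N,\nu) = \sys_3^*(M,\mu)$.

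In the loop case, every cocircuit avoids $p$, so every cocircuit weight is multiplied by $1/(1-\mu(p)) \geq 1$. This gives $\sys_3^*(N,\nu) \geq \sys_3^*(M,\mu)$. If $\mu(p) = 1$, then $\sys_3^*(M,\mu) = 0$ and the bound is trivial.

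Maximizing over $\mu$ then yields $\sys_3^*(M) \leq \sys_3^*(N)$.

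\textbf{Main obstacle and hypotheses.} The main step requiring care is the cocircuit bijection in the parallel case, in particular verifying that $C^* - p$ is a cocircuit of $N$ and not merely a union of cocircuits. To do this, I would use that cocircuits are complements of hyperplanes, and that the hyperplanes of $N$ are exactly the traces of hyperplanes of $M$. The latter holds since $p \in \operatorname{cl}(\{q\})$, so closures in $M$ and $N$ agree off $p$.

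Regularity is not actually needed for this argument. The rank hypothesis of Theorem~\ref{thm:monotonicity} is not needed either, since deletions of loops and parallel elements do not change the rank.
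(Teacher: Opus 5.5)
Your proof is correct and follows the paper's approach: you reduce to single deletions of loops and parallel elements, take one inequality from Lemma~\ref{lem:monoton del}, and for the other move the deleted element's weight onto its parallel partner. The only small difference is the loop case, where you rescale $\mu$ directly rather than invoking the coextension lemma via $M\backslash p = M/p$ (which spreads $\mu(p)$ uniformly), so you avoid that lemma's $r \geq 3$ hypothesis; you also check explicitly that the cocircuit bijection preserves the non-inclusion property, which the paper leaves implicit.
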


\begin{proof}
Passing from $M$ to its simplification can be achieved by deleting loops one at a time and then by deleting elements one at a time that are parts of two-circuits. Thus, by Lemma~\ref{lem:monoton del}, $\sys_3^*(\operatorname{si}(M))\leq \sys_3^*(M)$. We show that in each of these cases that the $3$-cosystole is also increasing. By induction, this completes the proof.

First suppose $p$ is a loop of $M$. Then $M\setminus p = M/p$ and we can apply the monotonicity under coextensions to conclude
    \[\sys_3^*(M) \leq \sys_3^*(M/p) = \sys_3^*(M\setminus p).\]
This shows that the $3$-cosystole is increasing under deletions of loops.

To finish the proof, we need to show that the $3$-cosystole is increasing under the deletion of an element $e$ that is part of a $2$-circuit $\{e,f\}$. To do this, fix a probability distribution $\mu$ on $M$. Define a probability distribution $\mu_+$ on $E - e$ by $\mu_+(f) = \mu(e) + \mu(f)$ and by $\mu_+ = \mu$ otherwise. Cocircuits in $M$ contain $e$ if and only if they contain $f$. In addition, if $C_1^*$, $C_2^*$, and $C_3^*$ are cocircuits of $M\setminus e$ that realize the minimum in the definition of $\sys_3^*(M\setminus e, \mu_+)$, then we get circuits $\tilde C_i^*$ in $M$ by the formula $\tilde C_i^* = C_i^* \cup e$ if $f \in C_i^*$ and $\tilde C_i^* = C_i^*$ if $f \not\in C_i^*$ that satisfy
    \[\sum \mu(\tilde C_i^*) = \sum \mu_+(C_i^*) = \sys_3^*(M\setminus e,\mu_+).\]
By definition, this implies $\sys_3^*(M,\mu) \leq \sys_3^*(M\setminus e)$ and hence $\sys_3^*(M) \leq \sys_3^*(M\setminus e)$, as claimed.
\end{proof}

The lemmas in this section prove Theorem \ref{thm:monotonicity}, as well as the corollary stated at the beginning of the section.

\vspace{1em}
\section{Computations}
\label{sec:estimates}
\vspace{1em}

In this section, we prove the estimate
    \[\sys_3^*(M) \leq 1\]
for the $11$ simple regular matroids $M$ of rank six given in Theorem \ref{thm:MSR}. Combined with Corollary \ref{cor:reduction}, this will complete the proof of Theorem \ref{thm:3-cogirth}. Along the way, we also prove the estimates on $\sys^*(M)$ claimed in Theorem \ref{thm:estimates<6}. 

The first step is to handle the graphic case.

\begin{lemma}\label{lem:3sys graph}
For $d \geq 3$, we have $\sys_3^*(M(K_{d+1})) = \frac{6}{d+1}$.
\end{lemma}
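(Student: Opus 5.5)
Write $n = d+1$ and $E = E(K_n)$. Cocircuits of $M(K_n)$ are the edge cuts $\delta(S)$ with both $S$ and its complement nonempty; the complete graph minus edges stays connected, so every such cut is a bond. The plan proves the lower bound with the uniform weight and the upper bound by averaging against the uniform weight.

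\textbf{Lower bound.} Take $\mu$ uniform, so each edge has weight $1/\binom{n}{2} = 2/(n(n-1))$. A cut $\delta(S)$ with $|S| = k$ has $k(n-k) \geq n-1$ edges, with equality only for vertex stars $\delta(v)$. Hence any three cocircuits have total weight at least $3(n-1)\cdot 2/(n(n-1)) = 6/n$, and $\sys_3^*(M(K_n)) \geq 6/n$. This requires at least one admissible triple, and three vertex stars $\delta(u),\delta(v),\delta(w)$ work. For instance, the edge from $u$ to a fourth vertex lies in $\delta(u)$ but not in $\delta(v)\cup\delta(w)$, which is where $n\ge 4$ (that is, $d \geq 3$) is used.

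\textbf{Upper bound.} Fix any probability distribution $\mu$; we want an admissible triple of weight at most $6/n$. Average over all triples of distinct vertex stars $\{\delta(u),\delta(v),\delta(w)\}$, each admissible by the argument above. Edge $xy$ lies in $\delta(u)$ exactly when $u\in\{x,y\}$. So a random triple of distinct vertices has expected total weight
\[
\sum_{e}\mu(e)\cdot 2\cdot\frac{3}{n} = \frac{6}{n}.
\]
Here each edge has two endpoints, and a given vertex is chosen with probability $3/n$. Equivalently, $\sum_v \mu(\delta(v)) = 2$, so the three lightest stars weigh at most $3\cdot 2/n$. Some triple therefore has weight at most $6/n$, giving $\sys_3^*(M(K_n),\mu)\le 6/n$ for every $\mu$.

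The only real obstacle is the lower bound. We must check that no admissible triple can beat three stars under uniform weight, and this reduces to the inequality $k(n-k)\ge n-1$ for $1\le k\le n-1$, which holds since $(k-1)(n-k-1)\ge 0$. We must also confirm that the cut-bond correspondence is correct in $K_n$.
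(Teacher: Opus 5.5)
Your proposal is correct and is essentially the paper's proof: the upper bound comes from $\sum_v \mu(\delta(v)) = 2$ over the $d+1$ vertex bonds, any three of which satisfy the non-inclusion property because $d \ge 3$. The lower bound comes from the uniform weight together with the fact that every cocircuit of $M(K_{d+1})$ has at least $d$ elements, and your inequality $k(n-k) \ge n-1$ and your check that an admissible triple exists just spell out what the paper attributes to the edge connectivity of $K_{d+1}$.
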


\begin{proof}
Consider the $d + 1$ cocircuits given by vertex bonds. Note that each edge in $K_{d+1}$ is contained in exactly two of these cocircuits, and note that any three satisfy the non-inclusion property since $d \geq 3$. Therefore, if $\mu$ is any weight function, and if $C_1^*, \ldots, C_{d+1}^*$ denotes these cocircuits ordered by their $\mu$-weight, then we have
    \[\sys_3^*(M(K_{d+1}), \mu) \leq \sum_{i=1}^3 \mu(C_i^*) \leq \frac{3}{d+1} \sum_{i=1}^{d+1} \mu(C_i^*) = \frac{6}{d+1}.\]
Since this estimate holds for any $\mu$, the upper bound follows. 

To see that equality holds, consider the constant weight function $\mu_1$ given by $\mu_1(e) = \binom{d+1}{2}^{-1}$ for all edges $e$. By the edge connectivity of $K_{d+1}$, every cocircuit of $M(K_{d+1})$ contains at least $d$ elements. 
Hence any three cocircuits $C_i^*$ satisfy
    \[\mu(C_1^*) + \mu(C_2^*) + \mu(C_3^*) \geq 3d\binom{d+1}{2}^{-1} = \frac{6}{d+1}.\]
Therefore
    \[\sys_3^*(M(K_{d+1})) 
    \geq \sys_3^*(M(K_{d+1}), \mu_1) \geq  \frac{6}{d+1}.\] 
Combined with the upper bound, this completes the proof.
\end{proof}

The next lemma establishes the 3-cosystole in each of the cographic cases.

\begin{lemma}[$3$-systole of cographic matroids]\label{lem:3sys-cograph}
Let $M$ be the cographic matroid on a graph $G$.
    \begin{enumerate}
        \item If $G = K_{3,3}$, then $\sys_3^*(M) = \tfrac 4 3$.
        \item If $G = G_{5,3}$, then $\sys_3^*(M) = \tfrac{12}{11}$.
        \item If $G = G_{5,4}$, then $\sys_3^*(M) = \tfrac 9 8$.
        \item If $G = G_i$ for some $i \in \{1,\ldots,9\}$, then $\sys_3^*(M) \leq 1$.
    \end{enumerate}
\end{lemma}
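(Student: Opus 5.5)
Since $M = M^*(G)$, the cocircuits of $M$ are exactly the cycles of $G$, and the non-inclusion property asks for three cycles none of which is covered by the union of the other two. Each part therefore needs an upper bound, coming from a covering argument in the style of Lemma~\ref{lem:3sys graph}, and, where equality is claimed, a lower bound from an explicit weight function.

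\emph{Upper bounds.} I will find a family $\mathcal F$ of cycles of $G$, together with nonnegative coefficients $\lambda_C$, such that every edge is covered with total weight at most some constant $c$, that is, $\sum_{C\ni e}\lambda_C \le c$, and such that every triple drawn from $\mathcal F$ (or from a suitable subfamily) has the non-inclusion property. An averaging argument then gives $\sys_3^*(M,\mu)\le 3c/\sum_C\lambda_C$ for every $\mu$.

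For the graphs $G_i$ with $i\neq 9$, and for $G_{5,3}$ and $G_{5,4}$, I will use the embeddings in $\mathbb P_2$ from Figures~\ref{fig:2Graphs} and~\ref{fig:Graphs}. In such an embedding the face boundaries are cycles and every edge lies on exactly two faces. Euler's formula for $\mathbb P_2$ gives $F = 1 + E - V$; for a cubic graph on 15 edges this is $F = 1 + 15 - 10 = 6$. Averaging over the six faces gives
\[
\sys_3^*(M^*(G_i)) \le 3\cdot\frac{2}{6} = 1,
\]
provided any three faces satisfy non-inclusion. This holds because no face is covered by the union of two others: a face sharing all its edges with two others would force a degenerate local picture.

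For $G_{5,g}$ the same count gives $V = 8$, $E = 12$, $F = 5$, so the bound is $6/5$. This is weaker than the claimed values $12/11$ and $9/8$, so there I will add further cycles, for instance non-contractible or short cycles, with suitable coefficients. I will find those coefficients via LP duality, which fits naturally with the lower bound below. For $K_{3,3}$ on the torus, or simply its nine $4$-cycles, each edge lies in four of them, which gives $3\cdot 4/9 = 4/3$.

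For $G_9$, which does not embed in $\mathbb P_2$, I will use a different family. One candidate is its embedding on the torus or Klein bottle, which has $F = 5$ faces, supplemented by a few extra cycles. Alternatively I will look for a direct double cover by six cycles; by cycle double cover arguments a $3$-edge-connected cubic graph has one with few cycles, and a cover by six cycles gives exactly the bound $1$.

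\emph{Lower bounds.} For $K_{3,3}$ and $G_{5,4}$, I will take constant weights. In $K_{3,3}$ the girth is $4$ and $E = 9$, so every triple has weight at least $12/9 = 4/3$. In $G_{5,4}$ the girth is $4$ and $E = 12$, which only gives $1$, so I must instead identify a nonuniform weighting, found by solving the dual LP, and exhibit it explicitly. The same applies to $G_{5,3}$: its single triangle must receive heavier weights nearby.

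\emph{Main obstacle.} The hardest step is finding and verifying the optimal weight and cover certificates for $G_{5,3}$ and $G_{5,4}$, including checking non-inclusion for all triples of cycles. A close second is handling $G_9$, where the projective-plane face argument is unavailable.
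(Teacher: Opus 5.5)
\textbf{Verdict: the proposal has genuine gaps.} Your framework (an averaging certificate over admissible triples for the upper bound, an explicit weight for the lower bound) is the paper's, and for $K_{3,3}$ and for $G_i$ with $i\neq 9$ you reproduce its argument. For parts (2) and (3), however, you never produce either certificate. You correctly see that the five faces in $\mathbb{P}_2$ give only $6/5$, and that constant weights give only $1$ on $G_{5,4}$. You then defer to solving an LP, but those certificates are the whole content of these parts. The paper supplies them as follows.
\begin{itemize}
\item \emph{$G_{5,4}$, upper bound.} In the M\"obius ladder, take the four $4$-cycles $C_i$ (each counted twice, $C_i=C_{i+4}$) and the eight $5$-cycles $D_i$ (a rim path of length four closed by a rung). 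These cover every edge exactly six times. They split into the groups $\{D_j,D_{j+1},C_{j+2},C_{j+3}\}$, $j$ odd, in each of which every member has an edge missed by the other three. So some admissible triple weighs at most $\tfrac34\cdot\tfrac64=\tfrac98$.
\item \emph{$G_{5,4}$, lower bound.} Weight $\tfrac1{16}$ on rim edges and $\tfrac2{16}$ on rungs gives every cycle weight at least $\tfrac6{16}$.
\item \emph{$G_{5,3}$, upper bound.} Twice the triangle, plus the three $4$-cycles and the six $5$-cycles, cover every edge four times. Pair the $5$-cycles that share three edges, attach to each pair a different $4$-cycle not contained in its union, and add the triangle to two of the groups. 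This gives one triple and two quadruples. If $t$ is the least weight of an admissible triple, then $4\ge t+\tfrac43t+\tfrac43t$, so $t\le\tfrac{12}{11}$.
\item \emph{$G_{5,3}$, lower bound.} Weights $\tfrac4{33}$, $\tfrac3{33}$, $\tfrac1{33}$ on the triangle edges, on the edges of the $4$-cycles, and on the remaining three edges give every cycle weight at least $\tfrac{12}{33}$.
\end{itemize}

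For $G_9$, the six-cycle double cover you propose does not exist. $G_9$ is two copies of $K_{2,3}$ joined by a perfect matching between their degree-two vertices. In any cycle double cover, exactly three cycles cross this $3$-edge cut, and each leaves a path of length at least $2$ in each copy. A copy has only $12$ edge incidences, so at most one cover cycle lies inside each copy. Hence every cycle double cover of $G_9$ has at most five cycles. (Equivalently, a six-cycle double cover of a cubic graph with $10$ vertices and $15$ edges would be the face set of an embedding in $\mathbb{P}_2$.) Your torus or Klein-bottle alternative gives only $6/5$, and the extra cycles are unspecified. The missing idea is to drop exactness. The six $4$-cycles of $G_9$ (three in each $K_{2,3}$) cover each edge at most twice. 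The three inside one copy each lie in the union of the other two, so you split the six into two triples that mix the copies.

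Two smaller points:
\begin{itemize}
\item Your bound $3c/\sum_C\lambda_C$ needs an explicit decomposition into admissible groups, and requires $\lambda_C\le\frac13\sum_D\lambda_D$. This already matters for $K_{3,3}$: three $4$-cycles through the same two vertices of one side violate non-inclusion, so the nine $4$-cycles must be split into three transversal triples.
\item Your local argument for faces only shows that consecutive edges of a face border distinct faces. That does not exclude an alternating pattern $F',F'',F',F''$ around an even face. So you must check on the drawings that every face borders at least three distinct faces.
\end{itemize}
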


\begin{proof}[Proof of Lemma \ref{lem:3sys-cograph} for $K_{3,3}$]
Cocircuits in $M = M^*(K_{3,3})$ are circuits in $M(K_{3,3})$. Let $C_1^*, \ldots, C_9^*$ denote the cocircuits in $M$ with four elements that correspond to the nine $4$-cycles in $K_{3,3}$. Note that every edge in $K_{3,3}$ lies in exactly four of the nine cycles. Moreover, we may relabel if necessary to ensure that each of $\{C_1^*, C_2^*, C_3^*\}$, $\{C_4^*,C_5^*,C_6^*\}$, and $\{C_7^*,C_8^*,C_9^*\}$ satisfy the non-inclusion property.

For any probability distribution $\mu$ on $M$, we have
    \[\mu(C_1^*) + \ldots + \mu(C_9^*) = 4.\]
After relabeling once more, we may assume that $\{C_1^*, C_2^*, C_3^*\}$ has the smallest sum of $\mu$-weights among the three groups we have formed. This implies
    \[\mu(C_1^*) + \mu(C_2^*) + \mu(C_3^*) \leq \frac 4 3,\]
which proves the upper bound claimed in the lemma. To see that equality holds, we note that $\sys_3^*(M)$ is bounded from below by $\sys_3^*(M, \mu_1)$ where $\mu_1$ is the constant weight function. Since all cycles in $G$ have length at least four, the lower bound follows.
\end{proof}

\begin{proof}[Proof of Lemma \ref{lem:3sys-cograph} for $G_{5,3}$]
As in the previous proof, we work with a cycles on the graph $G_{5,3}$. Fix a probability distribution $\mu$ on the edges of the graph. 

There is a unique $3$-cycle $C_0$, exactly three $4$-cycles $C_i$, and exactly six $5$-cycles $D_i$. Their union almost evenly covers the graph four times in the sense that each edge is counted exactly four times. The precise statement is that
    \[2\mu(C_0) + \sum_{i=1}^3 \mu(C_i) + \sum_{i=1}^6 \mu(D_i) = 4.\]
As we did with the previous proof, we make groups of cycles in the graph with the property that no cycle in a group is contained in the union of the others. This time, however, we need one group of three and two groups of four. To form the groups, we relabel so that $D_1 \cap D_2$, $D_3 \cap D_4$, and $D_5 \cap D_6$ have three edges each. In each case, the union $D_i \cup D_{i+1}$ for $i \in \{1,3,5\}$ contains $C_j$ for exactly one choice of $j$ in $\{1,2,3\}$. We group these sets in the form $\{C_j,D_i,D_{i+1}\}$ so that $C_j$ is not contained in $D_i \cup D_{i+1}$. Finally, to two of the groups, we add $C_0$.

By an averaging argument, the group of three has sum of $\mu$-weights at most $\tfrac{12}{11}$ or one of the groups of four has a subset of three with a sum of $\mu$-weights at most this value. In any case, we obtain the estimate $\sys_3^*(M^*(G_{5,3})) \leq \tfrac{12}{11}$. 

To prove that equality holds in this estimate, consider the weight function $\mu_{4,3,1}$ given by giving weight $4/33$ to edges in the three-cycle $C_0$, weight $3/33$ to edges in the four-cycles $C_i$, and weight $1/33$ to the remaining three edges. This weight function has the property that all cycles in the graph have $\mu_{4,3,1}$-weight at least $12/33$. Therefore summing any three gives at least $12/11$. Minimizing over triples satisfying the non-inclusion property then shows $\sys_3^*(M^*(G_{5,3})) \geq \tfrac{12}{11}$. 
\end{proof}

\begin{proof}[Proof of Lemma \ref{lem:3sys-cograph} for $G_{5,4}$]
This graph is isomorphic to the Moebius ladder on four rungs. Using the vertex labels in Figure~\ref{fig:2Graphs}, there are four $4$-cycles $C_i$, labeled by $\{i,i+1,i+4,i+5\}$, and eight $5$-cycles $D_i$, labeled $\{i,i+1,i+2,i+3,i+4\}$, for each $1 \leq i \leq 8$ and taking sums being modulo 8. For each $1 \leq i \leq 8$, take one copy of $C_i$, and one copy of each $D_i$. This gives a total of $16$ cycles whose union counts every edge in the graph exactly six times. We note that $C_i=C_{i+4}$ since the sum is modulo 8. For $j\in\{1,3,5,7\}$ group these cycles by $\{D_j,D_{j+1},C_{j+2},C_{j+3}\}$ with addition modulo 8. Each of these sets has the property that no set is contained in the union of the other three. Hence, any three in each group has the non-inclusion property. 

Therefore for any probability distribution $\mu$ on $G$, the sum of the $\mu$-weights of the $16$ cycles is $6$, and an averaging argument shows that there exists a subset of three of these cycles satisfying the non-inclusion property such that the sum of their $\mu$-weights is at most $3\cdot\tfrac{6}{16} = \tfrac{9}{8}$. Hence $\sys^*(M^*(G_{5,4})) \leq \tfrac 9 8$, as claimed.

As in the previous cases, the lower bound is shown using a specific weight function. In this case, we choose the weight function $\mu_{1,2}$ that is equal to $1/16$ on the eight edges of the 8-cycle in Figure~\ref{fig:2Graphs}, and $2/16$ on each of the edges crossing the boundary. Thus for any cycle $C$ of $G_{5,4}$, the weight $\mu_{1,2}(C)\geq \tfrac 6 {16}$, and the result follows. 
\end{proof}

\begin{proof}[Proof of Lemma \ref{lem:3sys-cograph} for $G_1,\ldots,G_9$]
There is a uniform proof for all $i \neq 9$ based on the fact that these graphs embed in the projective plane, as shown in Figure \ref{fig:Graphs}. Each embedding has six facial cycles, which correspond to six cocircuits in the cographic matroid $M^*(G_i)$. Each edge of the graph is contained in exactly two of these cycles, and any subset of three of these cycles satisfies the non-inclusion property. Therefore the sum of the weights, with respect to any probability distribution, of all six cocircuits is two, and there is a subset of three cocircuits satisfying the non-inclusion property for which the sum of their weights is at most one. Hence $\sys_3^*(M^*(G_i)) \leq 1$ for all $i \neq 9$.

For $i = 9$, we do not have an embedding of $G_9$ into the real projective plane, so we argue as in the previous cases. Let $C_1, \ldots, C_6$ denote the six $4$-cycles in the graph. Each edge of $G_9$ is contained in at most two of these cycles, so we have
    \[\mu(C_1) + \ldots + \mu(C_6) \leq 2\]
for any probability distribution $\mu$. Moreover we may group these cycles into two groups of three that satisfy the non-inclusion property. Therefore one of these groups has the property that the sum of their $\mu$-weights is at most one, as claimed.
\end{proof}

We have completed the proof Theorems \ref{thm:estimates} and \ref{thm:estimates<6} in the graphic and cographic cases, so we move on to the cases of $R_{10}$, $P(M(K_3), R_{10})$, and $P_T(M(K_5), M^*(K_{3,3}))$. For each of these matroids, we work with a binary representation where the elements correspond to the columns of a matrix. 

Before proceeding to the proofs in these cases, we recall some general results about cocircuits in binary matroids. Cocircuits are complements of hyperplanes in general, and hyperplanes in terms of binary representations of matroids can be realized by intersections with vector space hyperplanes, which in turn are described as the orthogonal complements of non-zero vector $v \in \Z_2^d$ where $d$ is the matroid rank. Putting these observations together, one can see that {\it cocircuits of binary matroids are precisely the minimal supports of vectors in the row space} of a binary representation by a matrix whose columns correspond to the elements of the ground set.

\begin{proof}[Proof of Theorem \ref{thm:estimates<6} for $R_{10}$]
Recall the representation of $R_{10}$ given as a submatrix of (\ref{equ:A12}). Consider the cocircuits $C_{12}^*, \ldots, C_{45}^*, C_{51}^*$ given by the supports of the vectors $v_1+v_2,\ldots,v_4+v_5, v_5 +v_1$. Each of these cocircuits has four elements, and every element in the ground set of $R_{10}$ appears in exactly two of these cocircuits. Therefore, for any probability distribution $\mu$ on $R_{10}$, the sum of $\mu$-weights of these five cocircuits equals two. Moreover, these cocircuits also have the property that any three satisfy the non-inclusion condition, so by an averaging argument we obtain the desired upper bound, $\sys_3^*(R_{10}) \leq \tfrac 6 5$. Moreover equality follows easily by considering the constant weight function $\mu_1$, for which $\sys_3^*(R_{10}, \mu_1) = \tfrac 6 5$.
\end{proof}

\begin{proof}[Proof of Theorem \ref{thm:estimates} for the parallel connection] 
Let $M$ denote $P(M(K_3), R_{10})$, which has a binary representation given in (\ref{equ:A12}).
    
     Let $C_0^*$ be the $2$-circuit given by the support of $v_0$, and let $C_1^*, \ldots, C_9^*$ be the cocircuits of length four that are given by supports of vectors in the subspace of the row space given by the span of $v_2,\ldots,v_5$. Such cocircuits exist in $M^*(K_{3,3})$, which is a minor of $R_{10}$ and hence $M$ (see, for example, \cite[Lemma 6.6.9]{Oxley-book}). Alternatively, we note that they correspond to the four vectors $v_i$ for $2 \leq i \leq 5$, the vectors $v_i+v_{i+1}$ for $2 \leq i \leq 4$, and the vectors $v_2+v_3+v_5$ and $v_2+v_4+v_5$.

Given any probability distribution $\mu$ on $M$, we have
    \[4\mu(C_0^*) + \sum_{i=1}^9 \mu(C_i^*) = 4 - 4\mu(f_2) \leq 4.\]
As in previous cases, we can form four groups of three or four, putting one $C_0^*$ in each group, in such a way that the non-inclusion holds for any three in any group. An averaging argument implies that $\sys_3^*(M, \mu) \leq \tfrac{12}{13}$.

To see that equality holds, we consider the weight function given by $\mu_{2,0,1}(f_i) = \tfrac 2 {13}$ for $i \in \{0,1\}$, $\mu_{2,0,1}(f_2) = 0$, and $\mu_{2,0,1}(f_i) = \tfrac 1 {13}$ for $3 \leq i \leq 11$. All cocircuits have $\mu$-weight at least $\tfrac{4}{13}$, so we see that $\sys_3^*(M) \geq \sys_3^*(M, \mu_{2,0,1}) \geq \tfrac{12}{13}$.
\end{proof}

\begin{proof}[Proof of Theorem \ref{thm:estimates} for the generalized parallel connection] 
Let $M$ denote the matroid $P_T(M(K_5), M^*(K_{3,3}))$, which has a binary representation as the vector matroid over $\Z_2$ of the matrix in \eqref{equ:A16}. As in the previous example, we have a $M^*(K_{3,3})$ minor that motivates our choice of cocircuits. 

The first two cocircuits we choose are $C_i^*$ for $i \in \{1,2\}$ corresponding to supports of $v_1$ and $v_2$, and the other nine we consider are motivated by the fact that we again have a $M^*(K_{3,3})$ minor appearing in the last four rows of the matrix. These cocircuits are $C_i^*$ given by the supports of $v_i$ for $i \in \{3,4,5,6\}$ and the other cocircuits $C_i^*$ for $i \in \{7,\ldots,11\}$ are given by the supports of $v_3+v_5$, $v_4+v_6$, $v_5+v_6$, $v_1+v_2+v_3+v_4$, and $v_1+\ldots+v_6$, respectively. 

For any weight function $\mu$, it can be checked that
    \[2\sum_{i=1}^2 \mu(C_i^*) + \sum_{i=3}^{11} \mu(C_i^*) = 4.\]
As in previous proofs, we note that any three cocircuits in each of $\{C_1^*, C_2^*, C_3^*, C_4^*\}$, $\{C_1^*, C_2^*, C_{10}^*\}$, $\{C_5^*, C_6^*, C_7^*\}$, and $\{C_8^*, C_9^*, C_{11}^*\}$ satisfies the non-inclusion property, and an averaging argument shows $\sys_3^*(M) \leq \tfrac{12}{13}$.

As in the previous cases, we can prove equality by considering a specific weight function. In this case, we look at the weight function equal to $0$ on columns $8$, $9$, and $10$ and equal to $\tfrac 1 {13}$ on the others. Every cocircuit has weight at least $\tfrac{4}{13}$. Hence, $\sys^*_3(M)\geq \frac{12}{13}$.
\end{proof}

\bibliographystyle{alpha}
\bibliography{bibfile}

\end{document}